\def\be{\begin{equation}}
\def\ee{\end{equation}}
\def\sign{\hbox{\rm sign}\,}
\def\abs#1{\left\vert #1 \right\vert} % absolute value
\newtheorem{lemm}{Lemma}[section]
\newtheorem{thm}{Theorem}[section]
\newtheorem{prop}{Proposition}[section]
\newtheorem{lemma}{Lemma}[section]
\newtheorem{algo}{Algorithm}[section]
\def\l1{L1}  % 
\def\bp{\begin{prop}}
\def\ep{\end{prop}}
\def\bpf{\begin{proof}}
\def\epf{\end{proof}}
\def\bd{\begin{defi}}
\def\ed{\end{defi}}
\def\br{\begin{oss}}
\def\er{\end{oss}}
\def\bl{\begin{lemma}}
\def\el{\end{lemma}}
\title{A fast nonconvex Compressed Sensing algorithm for highly low-sampled MR images reconstruction}
\author{D. Lazzaro$^1$,  E. Loli Piccolomini$^1$ and F. Zama$^1$ \\
\normalsize
$^1$      
 \normalsize
 Department of Mathematics, University of Bologna,\\
} \date{\today}
\begin{document}
\maketitle
\begin{abstract}
In this paper we present a fast and efficient method for the reconstruction of Magnetic Resonance Images (MRI) from 
severely under-sampled data. 
From the Compressed Sensing theory we have  mathematically modeled the problem as a constrained minimization problem
with a family of  non-convex regularizing objective functions depending on a parameter  and a  least squares data fit constraint. 
We  propose a fast and efficient algorithm, named Fast NonConvex Reweighting (FNCR) algorithm,  
based on an iterative scheme where the non-convex problem is approximated by its convex 
linearization and the penalization parameter is automatically updated. 
The convex problem is solved by a Forward-Backward  procedure, where the Backward step is performed 
by a Split Bregman strategy. Moreover, we propose a new efficient iterative solver for the arising linear systems. 
We prove the convergence of the proposed FNCR method.
The results on synthetic phantoms and real images show that the
algorithm is  very well performing  and computationally efficient, even when compared to the best performing methods proposed in the literature.
\end{abstract}
%
%\date{}
%   \input{intro19}
\section{Introduction \label{intro}}
The Compressed Sensing, or Compressive Sampling (CS),  is a quite recent theory (the first publications \cite{Candes2006,Donoho2006}
appeared in 2006) 
aiming at recovering signals and images from fewer measurements than those required by the traditional Nyquist law, 
under specific requirements on the data. CS has received emerging attention in medical imaging and 
in Magnetic Resonance Imaging (MRI) in particular, 
where the assumptions behind the theory are satisfied. Reduced sampled acquisitions are employed both in dynamic MRI 
\cite{Gamper2008,MAJUMDAR2013,Landi2008}, to speed up the acquisition  
to better catch the dynamic process inside the body, and in the single MRI, where fast imaging is essential 
to improve patient care and reduce the costs \cite{Lustig2007}. 
The aim is to reconstruct  MR images from  highly  
under-sampled data  and the research in this field seeks for methods that further reduce the amount of 
acquired data without degrading the reconstructed images.  
Some important papers, such as \cite{Lustig2007,Wang2016,chartrand2009,trzasko2009}, studied the theoretical application of CS 
in the reconstruction of  MR images from low-sampled data. 

 CS theory assumes that the signal or the image  is sparse in some transform domain: in the case of MR the image can be sparse, 
 for example,  in the wavelet domain or in the gradient domain.
 In this paper we assume that the image  is sparse in the gradient domain.
 Moreover CS requires a
 %Many authors use the finite difference  as the sparsifying domain , since  medical images are often sparse in the gradient domain.
 nonlinear reconstruction process enforcing both the sparsity in the transform 
domain and the data consistency. In particular, the nonlinear reconstruction process is usually modeled as 
the minimization of a function imposing the sparsity on the coefficients in the transform domain under 
a constraint on data consistency. In this paper we consider the following general nonlinear  model for the MRI reconstruction: 
\begin{equation}
\min_u F(Du) \ \ 
s.t. \ \mathcal{J}(u)<\epsilon
\label{eq1}
\end{equation}
where $u$ is the  image to be recovered,  $F$ is the sparsifying transform in the gradient domain $D$,  $\mathcal{J}(u)$ 
is the data-fit function and $\epsilon$ is a parameter controlling the fidelity of the reconstruction to the measured data.
The problem \eqref{eq1} is often solved in its penalized formulation:
\begin{equation}
\min_u \mathcal{J}(u) +\lambda F(Du)
\label{eq2}
\end{equation}
where $\lambda>0$ is called penalization or regularization parameter.

The choice of the sparsifying function $F$ is crucial for the effectiveness of the application. 
In literature, there are different proposals for the choice of $F$.
It is well known that the $\ell_0$ semi-norm, measuring the cardinality of its argument, 
is the best possible sparsifying function, but its minimization is computationally very expensive. 
A remarkable result of the CS theory \cite{Candes2006,Donoho2006} states 
that the signal recovery is still possible if one substitutes the $\ell_1$ norm to the $\ell_0$ semi-norm under suitable hypotheses;
hence some authors considered the sparsifying function  as the Total Variation  function \cite{Lustig2007, Zhang2010}. 
%and used different strategies to solve the minimization problem in its constrained formulation  \eqref{eq1} 
%or in its equivalent unconstrained formulation.

In order to better approximate the $\ell_0$ semi-norm, Chartrand in \cite{chartrand2007,chartrand2009} used the $\ell_p$, $ 0 <p <1$,
nonconvex sparsifying norm. Even if the global minimum of \eqref{eq1} cannot be guaranteed in this case, the author presents
a proof of asymptotic convergence of $\ell_p$ towards $\ell_0$ by suitably setting the restricted isometry constants.
In \cite{trzasko2009} the authors generalize this idea by using  a class of sparsifying functions homotopic with the $\ell_0$ semi-norm.

In \cite{YIFEI} the authors choose as function $F$ the difference between the convex norms  $\ell_1$ and  $\ell_2$  and they 
 propose a Difference of Convex functions Algorithm (DCA), proving that the DCA approach converges to stationary points. 
In practice, the DCA iterations, when properly stopped, are often close to the global
minimum and produce very good results. 
In \cite{IL1MRI}, the authors generalize the proposal presented in \cite{YIFEI}, by minimizing a class
of concave sparse metrics in a general DCA-based CS framework. The sparsifying function can be written as:
\begin{equation}
F(Du)= \sum_i p_i(Du),
\end{equation}
where, for each $i$, $p_i$, is defined on $[0, +\infty)$ and is concave and increasing.

In \cite{DEF2016} the authors used a family of sparsifying nonconvex functions of the gradient of $u$, $F_{\mu}(Du)$, 
homotopic with the $\ell_0$ semi-norm, depending on a parameter $\mu$ and converging to $\ell_0$ as $\mu$ tends to zero. 
In that paper the algorithm for solving the resulting nonconvex minimization problem had been sketched together with a simple test problem 
of MRI reconstruction from low sampled data that produced very promising results.

Aim of this paper is to present in details the 
Fast Non Convex Reweighting (FNCR) algorithm, to prove its convergence and to show the results on a wide experimentation of MRI reconstructions.
Moreover, we propose a new matrix splitting strategy which allows us to obtain a very efficient explicit 
iterative solver for the linear systems solution representing most time consuming step of FNCR.

In the FNCR algorithm the minimization problem \eqref{eq1} is solved by an iterative scheme where the parameter $\mu$ gradually approaches zero and each nonconvex problem  is approximated with a convex problem by a weighted linearization.
The resulting convex  problem is solved by an iterative Forward-Backward (FB) algorithm, and a Weighted Split-Bregman iteration is applied in the 
Backward step.   
The implementation of the Weighted Split-Bregman iteration requires the solution of linear systems, 
representing the  most computationally expensive step of the whole algorithm. 
By exploiting a decomposition of the Laplacian matrix, we propose a new splitting that exhibits further filtering properties producing very good results  both in terms  
of computational efficiency and image quality.

We present many numerical experiments of MRI reconstructions from reduced sampling, both on phantoms and real images, with different low-sampling masks  and we compare 
our algorithm with some of the most recent software  proposed in the literature  for CS MRI. 
The results show that the FNCR algorithm  is  very well performing  
and computationally efficient especially when very high under-sampling is assumed.

The paper is organized as follows: in section \ref{model} we motivate the choice of the nonconvex model; 
in section \ref{FNCR} we explain in details the proposed FNCR algorithm
for the solution of the nonconvex minimization problem;  in section \ref{numres}
we show the results obtained in the numerical experiments and at last in section \ref{concl} we report some conclusions. 
The Appendices A and B contain the proofs of the proposed  convergence theorems.
%-------------------------------------------------------------------------------------------
% \input{model19}
 \section{The nonconvex regularization model\label{model}}
In this section we discuss the assumed mathematical model  \eqref{eq1} for the reconstruction of MR images from low sampled data.

Under-sampled MR data are represented by a reduced set of acquisitions in the Fourier domain (K-space), affected by a dominant gaussian noise.  
Hence the data fidelity function in 
\eqref{eq1} is chosen as the least squares function:
\begin{equation} \mathcal{J}(u) \equiv \frac{1}{2} \|\Phi u -z\|_2^2
\label{LS}
\end{equation}
where $z$ is the vector of the acquired data in the Fourier space, $u$ is the image to be reconstructed and 
$\Phi$ is the under-sampling Fourier matrix,  obtained by the Hadamard  product between  the full resolution Fourier matrix $\mathbf{F}$ 
and the under-sampling mask $\mathcal{M}$, i.e.
\begin{equation}\Phi = \mathcal{M} \circ \mathbf{F}.
\label{eq:mask}
\end{equation}
 
When the MR K-space is only partially measured, the inversion problem to obtain the MR image is under-determined, 
hence it has infinite possible solutions.
Following the CS theory, a way to choose one of these infinite solutions is to impose a prior in the sparsity solution domain. 
As we discussed in the introduction, in this paper we consider the gradient image domain, $Du$, as the sparsity domain. 

Concerning the choice of the sparsifying function $F$,  we follow the approach proposed by Trzasko et. al in \cite{trzasko2009}. 
Starting from the proposal of Chartrand in \cite{chartrand2007} 
of using $\ell_{p}$ functions as priors ($0<p<1$), 
they show that the reconstructed signals are better that those obtained with $\ell_1$ prior, even if the $\ell_p$  
family is constituted by nonconvex functions with  many possible local minima.
Moreover, as $p$ approaches to zero, the number of 
data necessary to accurately reconstruct a signal decreases. Instead of the $\ell_p$ priors, 
the authors in \cite{trzasko2009} propose to use  classes of functions, 
depending on a scale parameter and providing a refined measure
of the signal sparsity as the scale parameter  approaches to zero.
Following the same idea we define a class of nonconvex functions $F_{\mu}(Du)$,
 depending on the parameter $\mu >0 $, satisfying the  property:
\begin{equation}
\lim_{\mu \to 0}{\sum_{\Omega} F_{\mu}(Du)=\|Du\|_0}.
\label{eq:homot}
\end{equation}
where $\Omega$ is the image domain.
In our paper we consider the following  family of  functions $F_{\mu}(Du)$:
\begin{equation}
 F_{\mu}(Du)=\sum_{i,j} \left ( \psi_{\mu}(|u_x|_{i,j})+ \psi_{\mu}(|u_y|_{i,j}) \right )
\label{eq:functions}
\end{equation}
where $Du=(u_x,u_y)  \in \mathbb{R}^{N \times 2N}$ is the discrete gradient of the image $u \in \mathbb{R}^{N \times N}$, 
whose components $u_x$ and  $u_y$ are obtained 
 by backward finite differences approximations of the
 partial derivatives in the $x$ and $y$ spatial coordinates. We choose the function 
$ \psi_{\mu}(t) : \mathbb{R} \setminus \{0\} \rightarrow \mathbb{R}$ having the following expression:
 \begin{equation}
 \psi_{\mu}(t)=\frac {1 }{\log(2)} \log   \left ( \frac {2 }{1+e^{-\frac {|t| }{\mu}}   } \right ) ,\quad \mu>0
 \label{Function_PSI}
 \end{equation}
that has been shown in \cite{Montefusco2013} to be  very well performing in a CS framework.
In the same paper the authors show that $\psi_{\mu}(t)$ has the property
$$ \lim_{\mu \rightarrow 0} \psi_{\mu}(t)  = \| t \|_0. $$
and that it is an increasing, nonconvex, symmetric, twice differentiable  function.

Hence, we solve a sequence of constrained minimization problems 
\begin{equation}
\arg\min_u F_{\mu}(Du) \ \ \ s.t. \ \|\Phi u-z\|_2^2<\epsilon
\label{eq:constr}
\end{equation}
with $\mu$ varying  decreasing to zero.
%-------------------------------------------------------------------------------------------
%\input{method19}
\section{The  Fast Non Convex Reweighting Algorithm (FNCR) \label{FNCR}}

In this section we show the details of the FNCR method  
for the solution of the sequence of minimization problems \eqref{eq:constr}, combining 
different  suggestions proposed in 
 \cite{Montefusco2012, Montefusco2013} and recently in \cite{Lazzaro2017,DEF2016}.

FNCR is constituted by five nested iterations.
 In order to give a clear and simple explanation   we split the FNCR description into four  paragraphs.
 In the description each loop index  is represented  by a different  alphabetical symbol.
 \begin{itemize}
  \item \ref{sub1} The continuation scheme (index $\ell$) and the Iterative Reweighting method ( index $h$).
  \item \ref{FBstep} The Forward-Backward (FB) strategy for the solution of the convex minimization problem obtained in  \ref{sub1} (index $n$).
  \item \ref{SBstep} The Weighted Split Bregman algorithm for the solution of the Backward step in \ref{FBstep} (index $j$) .
	\item \ref{LS} The new iterative scheme for the solution of the linear system arising in \ref{SBstep} (index $m$).
 \end{itemize}
Finally, collecting all the details, we present the steps of the FNCR method  in Algorithm \ref{alg2}.
We report all the  proofs of the theorems in appendices \ref{A} and \ref{A_B}.
%%%%%%%%%%%%%%%%%%%%%%%%%%%%%%%%%%%%%%%%%%%%%%%%%%%%%%%%%%%%%%%%%%%%%%%%%%%%%%%%%%%%%%%%%%%%%%%%%%%%%

\subsection{The continuation scheme and the Iterative Reweighting l1 method \label{sub1}}

The sequence of problems defined by \eqref{eq:constr} is implemented by an iterative continuation scheme on $\mu$. 
 Given a starting vector $u^{(0)}=\Phi^T z$ a sequence of  approximate solutions $u^{(1)},u^{(2)}, \ldots u^{(\ell+1)}$ 
 is computed as:
\begin{equation}
u^{(\ell+1)}=\arg\min_u  {F_{\mu_{\ell}}(Du)} \ \ \ s.t. \  \|\Phi u-z\|_2^2<\epsilon, \ \ \ell=0,1 \ldots
\label{eq:continuation}
\end{equation}
with $ \mu_{\ell} <  \mu_{\ell-1}$. In our work  the parameter $\mu$ is decreased with the following rule:
\begin{equation}
\mu_{\ell}= 0.8 \mu_{\ell-1}.
\label{eq:mu}
\end{equation} 

At each iteration $\ell$, the constrained minimization problem \eqref{eq:continuation} is stated  in its  penalized formulation:
\begin{equation}
u^{(\ell+1)}=\arg\min_u \left \{ \lambda F_{\mu_{\ell}}(Du) + \frac{1}{2} \| \Phi u - z \|_2^2 \right \}, \ \ \ \ell=0,1, \ldots.
\label{eq:nconv}
\end{equation}

An Iterative Reweighting $l_1$ ($IRl1$) algorithm \cite{Candes2008} is applied to each  
nonconvex minimization subproblem $\ell$ in \eqref{eq:nconv}. 
The $IRl1$ method  
 computes a sequence of approximate solutions $\bar u^{(0)}, \bar u^{(1)}, \ldots, \bar u^{(h)}$,
 obtained from convex minimizations. In appendix A we prove the convergence of the sequence $\{ \bar u^{(h)} \}$
  to the solution of the original nonconvex problem $u^{(\ell+1)}$.
In particular, at each iteration $h$ of the $IRl1$ algorithm, the nonconvex function $F_{\mu_{\ell}}(Du)$  is locally approximated 
by its convex majorizer   $\mathcal{F}_{h,\mu_{\ell}}(Du)$ obtained by linearizing $F_{\mu_{\ell}}(Du)$
at the gradient of  the  approximate solution  $\bar u^{(h)}$:
\begin{equation}
\mathcal{F}_{h,\mu_{\ell}}(Du) = F_{\mu_{\ell}}(D\bar u^{(h)}) + f_{h,\mu_{\ell}}(Du)-f_{h,\mu_{\ell}}(D\bar u^{(h)})
\label{eq:fbig}
\end{equation}
%where
%$Du^{(\ell)}=(u_x^{(\ell)}, u_y^{(\ell)})$ is the gradient of the image $u^{(\ell)}$ and
where
\begin{equation}
f_{h,\mu_{\ell}}(Du)= \sum_{i,j=1}^{N} \left ( \psi'_{\mu_{\ell}}(|\bar u_x^{(h)}|_{i,j}) | u_x|_{i,j} + 
\psi'_{\mu_{\ell}}(|\bar u_y^{(h)}|_{i,j}) | u_y|_{i,j}  \right )
\label{fell}
\end{equation}
($\psi(t)$ is defined by \eqref{Function_PSI}).
Hence, at each iteration $h$ of the $IRl1$ algorithm we solve the convex minimization problem:
\begin{equation}
\bar u^{(h+1)}= \arg \min_{u} {\cal P}(\lambda_{h},\mathcal{F}_{h,\mu_{\ell}}(Du),u)
 \ \ \ h =0, 1, \ldots
\label{IRL1}
\end{equation}
where 
\begin{equation}
\label{eq:defP}
{\cal P}(\lambda_{h},\mathcal{F}_{h,\mu_{\ell}}(Du),u)=\left \{\lambda_{h} \mathcal{F}_{h,\mu_{\ell}}(Du)  +\frac{1}{2}||\Phi u- z||_2^2 \right \}  .
\end{equation}
Since $f_{h,\mu_{\ell}}(Du)$ is the only term of \eqref{eq:fbig} depending on $u$, the minimization problem \eqref{IRL1} reduces to the following:
\begin{equation}
\bar u^{(h+1)}= \arg \min_{u}  \left \{\lambda_{h} f_{h,\mu_{\ell}}(Du)  +\frac{1}{2}||\Phi u- z||_2^2 \right \}.
\label{eq:fsmall}
\end{equation}

Concerning the computation of the regularization  parameters $\lambda_{h}$, 
starting from a sufficiently large initial estimate $\lambda_0 = r_0 \| u^{(0)} \|_1$,
where  $0<r_0<1$ is an assigned constant
we perform an iterative update creating a decreasing sequence $\{ \lambda_{h}\}, h=0,1, \ldots$ as:
\begin{equation}
\lambda_{h+1}=\lambda_{h} \frac{ {\cal P}(\lambda_{h},\mathcal{F}_{h,\mu_{\ell}}(D\bar u^{(h)}),\bar u^{(h)})}{{\cal P} (\lambda_{h-1},\mathcal{F}_{h-1,\mu_{\ell}}(D\bar u^{(h-1)}),\bar u^{(h-1)})}.
\label{eq:lambda}
\end{equation}
In fact we observe that if 
$\lambda_{h}<\lambda_{h-1}$ then it easily follows \cite{Montefusco2012} that:
$${\cal P}(\lambda_{h},\mathcal{F}_{h,\mu_{\ell}}(D\bar u^{(h)}),\bar u^{(h)})< {\cal P}(\lambda_{h-1},\mathcal{F}_{h-1,\mu_{\ell}}(D\bar u^{(h-1)}),\bar u^{(h-1)}).$$
We have heuristically tested that this adaptive update of $\lambda_{\ell}$ reduces the number of  iterations
for the solution of the inner minimization problem.
A warm starting rule is adopted to compute the initial iterate of the $IRl1$ scheme.

The outline of the scheme  is shown in  algorithm  \ref{alg1} that is stopped when the relative distance between two successive iterates is small enough.

%--------------------------------------------------------------------------------------------------------------------------
\begin{table}[h]
\begin{algo}%[\textsc{Input: $y$, $\lambda_0$, $r_{\lambda}>0$, $\mu>0$; Output:]
\begin{center}
\begin{tabular}{c}
\hline
 \textbf{Continuation scheme with IRl1 method}\quad \quad \quad \quad\quad \quad \quad \quad \quad \quad \quad \quad   \\
 \hline 
\begin{minipage}[c]{11cm}
\begin{tabular}{ll}
%\multicolumn{2}{l}{
(Input: z, $r_0$) \\
$u^{(0)}=\phi^T z, \, \lambda_0=r_0 \|u^{(0)}\|_1$, $\mu=\|\nabla u^{(0)} \|_1$ \\
$w_{i,j}^x=1$,\, $w_{i,j}^y=1$, $ \forall i,j$\\
$\bar u^{(0)}=u^{(0)}$, $\ell=0$ \\[1pt]
{\tt repeat}  \ \ \  \ \ \\[2pt]
\hspace{0.5cm} $h=0$ \\
\hspace{0.5cm} {\tt repeat}  \ \ \ ($IRl1$ iteration)  \ \ \\[2pt]
%
% \vbox{\begin{equation} \hspace{-2.8cm}
%  f_{h,\mu_\ell}(Du)= \sum_{i,j} \left ( w_{i,j}^{x} | u_x|_{i,j} + w_{i,j}^{y} |u_y|_{i,j}  \right )
%   \label{eq:alg_fmu}
% \end{equation}}\\
\quad
  \vbox{\begin{multline} 
 \bar u^{(h+1)}= \arg \min_{u}  \left \{\lambda_{h} \sum_{i,j} \left ( w_{i,j}^{x} | u_x|_{i,j} + w_{i,j}^{y} |u_y|_{i,j}  \right )+  \right. \\
     \left . +\frac{1}{2}||\Phi u- z||_2^2 \right \}
  \label{eq:alg1}
\end{multline}}\\

%   \vbox{\begin{equation} 
%  \bar u^{(h+1)}= \arg \min_{u}  \left \{\lambda_{h} \sum_{i,j} \left ( w_{i,j}^{x} | u_x|_{i,j} + w_{i,j}^{y} |u_y|_{i,j}  \right )  +\frac{1}{2}||\Phi u- z||_2^2 \right \}
%   \label{eq:alg1}
% \end{equation}}\\
 \quad \vbox {\begin{equation} \hspace{-2.8cm} w_{i,j}^{x}=\psi'_{\mu_\ell}(|\bar u_x^{(h+1)}|_{i,j}), w_{i,j}^y=\psi'_{\mu_\ell}(|\bar u_y^{(h+1)}|_{i,j}) \label{eq:weights1}
\end{equation}
}, \quad \\[2pt]
\quad \quad Update $\lambda_{h+1}$ as in \eqref{eq:lambda}\\ 
\quad \quad $h=h+1$ \\
%: $\lambda_{\ell+1}=\lambda_{\ell} \frac{ {\cal P}(\lambda_{\ell},F_{\ell,\mu}(Du^{(\ell)}),u^{(\ell)})}{{\cal P} (\lambda_{\ell-1},F_{{\ell}-1,\mu}(Du^{(\ell-1)}),u^{(\ell-1)})}$  \\[2pt]
\hspace{0.5cm}{\tt until}  convergence \\% as in \eqref{eq:stopirl1} \\[2pt]
$u^{(\ell+1)}=\bar u^{(h)}$\\
%$\bar u^{(0)}=u^{(\ell)}$ \\
$\lambda_0=\lambda_{h}$ \\
$\ell=\ell+1$,   \\[2pt]
Update $\mu_{\ell}$ as in \eqref{eq:mu} \\[2pt]
{\tt until}  convergence % as in \eqref{eq:stopirl1} \\[2pt]
\end{tabular}
\end{minipage}\\ \hline
\end{tabular}
\end{center}
 \label{alg1}
\end{algo} 
\end{table}

\subsection{The Forward-Backward strategy for the solution of  problem \label{FBstep}}

In this paragraph we analyze the details of the method used in FNCR 
for the solution of the convex minimization problem \eqref{eq:fsmall}.

Since $\psi'_{\mu}(\cdot)>0$ when $\mu >0$,
we rewrite \eqref{fell} as follows:
$$ f_{h,\mu_{\ell}}(Du) = \|\nabla_x^w u\|_1+\|\nabla_y^w u\|_1$$
 where

$$\|  \nabla_x^w u\|_1=\sum_{i,j=1}^N  (w_{i,j}^x) | u_x|_{i,j}, \ \ \|  \nabla_y^w u\|_1=\sum_{i,j=1}^N   (w_{i,j}^y) | u_y|_{i,j}$$
with
$w_{i,j}^x$ and $w_{i,j}^y$ defined as in \eqref{eq:weights1}.

Hence problem  \eqref{eq:fsmall} is rewritten as:
\begin{equation}
{\bar u}^{(h+1)}= \arg \min_{u} \left \{ \frac{1}{2 }\| \Phi u - z \|_2^2+ \lambda_{h} \left(\|\nabla_x^w u\|_1+\|\nabla_y^w u\|_1 \right) \right \}
\label{eq:convex}
\end{equation}
The convex minimization problem \eqref{eq:convex} has a unique solution as proved in \cite{Lazzaro2017} 
and we compute it  by a converging sequence of 
Accelerated Forward-Backward steps $(\hat{v}^{(n)}, \hat{u}^{(n)}), n=1,2, \ldots$ where a FISTA  acceleration strategy \cite{Teboulle2009} 
is applied to the backward step:
\begin{flalign}
\hat{v}^{(n)}={\hat u}^{(n-1)} +\beta \Phi^t \left (z - \Phi{\hat u}^{(n-1)} \right ) \\
{\tilde u}^{(n)} = \arg\min_{u} \left \{\lambda_{h}  \left (\|  \nabla_x^w u\|_1+ \|  \nabla_y^w u\|_1 \right ) +\frac{1}{2  \beta} 
\| u - \hat{v}^{(n)} \|_2^2 \right \}\label{B} 
\end{flalign}

 \begin{equation}
\hat u^{(n)}={\tilde u}^{(n-1)}+\alpha({\tilde u}^{(n)}-{\tilde u}^{(n-1)}), 
 \label{eq:F}
 \end{equation}
and $\alpha$ is chosen as follows:
 \begin{equation}
t_{n}=\frac{1+\sqrt{1+4t_{n-1}^2}}{2}, \ \ \alpha=\frac{t_{n-1}-1}{t_{n}}, \ \ t_0=1 
\label{eq:alpha}
\end{equation}
In order to ensure the  convergence of the sequence $(\hat{v}^{(n)}, \hat{u}^{(n)})$ to the solution of \eqref{eq:convex}, the following condition on $\beta$ must hold \cite{Combettes2005}:
$$ 0 < \beta < \frac{2}{\lambda_{max}(\Phi^t\Phi)}$$ 
where $\lambda_{max}$ is the maximum eigenvalue in modulus.
In our MRI  application the matrix  $\Phi$ is orthogonal, hence the condition is $ 0 < \beta < 2$.
%We combine the forward-backward algorithm FISTA \eqref{FA} acceleration strategy \cite{Teboulle2009}.
We observe that while $\hat{v}^{(n)}$ and  $\hat u^{(n)}$ are computed by explicit formulae, the computation of ${\tilde u}^{(n)}$
requires a more expensive method that we describe in the next paragraph. 

The Forward-Backward iterations are stopped with  the following  stopping condition:
\begin{equation}
\Delta_{n}-\Delta_{n-1}  < \gamma \lambda_{h}  \ \ \ \hbox{where} \ \ \ 
\Delta_n = \sum_{i,j=1}^N  \left ( (w_{i,j}^x) | \hat u^{(n)}_x|_{i,j}+ (w_{i,j}^y)|\hat u^{(n)}_y|_{i,j}\right). .
\label{stop}
\end{equation}
and $\gamma >0$ is a suitable tolerance.

\subsection{The Weighted Split Bregman algorithm for the solution of the Backward step \label{SBstep}}

In this  paragraph we show how 
the minimization problem \eqref{B} can be  efficiently solved by means of
a splitting variable strategy, proposed in the Weighted Split Bregman algorithm \cite{Zhang2010}.
Introducing two auxiliary vectors $D_x, D_y \in \mathbb{R}^{N^2}$ we rewrite \eqref{B} as 
a constrained minimization problem as follows:
\begin{equation}
\label{SB}
 \min_{u} \left \{\frac{1}{2 \beta }\| u - \hat{v}^{(n)} \|_2^2 +  \lambda_{h} \left(\| D_x\|_1 + \| D_y \|_1 \right) \right \}, 
 \ \ \ s.t. \ \ D_x = \nabla_x^w u, \ \ D_y = \nabla_y^w u,
\end{equation}
which can be stated in its quadratic penalized form as:
\begin{equation}
\min_{u, D_x, D_y} \left \{\frac{1}{2 \beta } \| u - \hat{v}^{(n)} \|_2^2 +  \lambda_{h} \left (\| D_x\|_1 + \| D_y \|_1 \right)  + 
\frac{\theta}{2}
\left (  \|  D_x-\nabla_x^w u \|_2^2 +  \| D_y-\nabla_y^w u \|_2^2 \right ) \right \}
\label{eq:LSB}
\end{equation}
where $ \theta>0$ represents the penalty parameter.
In order to  simplify the notation, exploiting the symmetry in the $x$ and $y$ variables, 
we use the subscript $q$  indicating either $x$ or $y$.
%Due to the symmetry of the procedure with respect $x$ and $y$ and to simplify the notation, 
%we use in the following formulas the subscript $q$, where q can be either x or y. 

By applying the Split Bregman iterations, given an initial iterate $U^{(0)}$, we compute a sequence $U^{(1)},U^{(2)}, \ldots, U^{(j+1)}$ 
by splitting \eqref{eq:LSB}  into three minimization problems as follows. 

Given $e_q^{(0)}=0$, $D_q^{(0)}=0$ and $U^{(0)}={\hat v}^{(n)}$, compute:

%Given $e_x^{(0)}=e_y^{(0)}=0$, $D_x^{(0)}=D_y^{(0)}=0$ and $U^{(0)}={\hat v}^{(n)}$, compute:

%Starting from $D_x^{0}=D_y^{0}=e_x^0=e_y^0=0$, we solve:
\begin{multline}
\label{UK}
 U^{(j+1)}=\arg \min_{u }\left\{ \frac{1}{2 \beta} \| u - \hat {v}^{(n)} \|_2^2+ 
 \frac {\theta}{2}  \| D^{(j)}_x-\nabla_x^w u -e_x^{(j)}\|_2^2+ \right . \\
\left. \frac { \theta}{2} \| D_y^{(j)}-\nabla_y^w u -e_y^{(j)}\|_2^2\right \}
\end{multline}
\begin{multline}
\label{Dx}
D_{q}^{(j+1)}=\arg \min_{ D_{q} }\left\{\lambda_{h}\|D_{q}\|_1+\frac {\theta}{2}  \| D_{q}-\nabla_{q}^w U^{(j+1)}-e_{q}^{(j)} \|_2^2 \right \}\\
= \mbox{Soft}_{\Lambda} (\nabla_{q}^w U^{(j+1)}+e_{q}^{(j)}),
\end{multline}
%\begin{multline}
%\label{Dy}
%D_y^{(j+1)}=\arg \min_{ D_y }\left\{\lambda_{\ell}\|D_y\|_1+\frac {\theta}{2}  \| D_y-\nabla_y^w U^{(j+1)}-e_y^{(j)} \|_2^2 \right \}\\=
%\mbox{Soft}_{\Lambda} (\nabla_y^w U^{(j+1)}+e_y^{(j)}) ;
%\end{multline}
where 
\begin{equation}
\Lambda = \frac{\lambda_{h}}{\theta}
\label{eq:Lambda}
\end{equation}
and  $e_{q}^j$ is updated according to the following equation:
\begin{multline}
\label{ex}
e_{q}^{(j+1)}=e_{q}^{(j)}+\nabla_{q}^w U^{(j+1)}-D_{q}^{(j+1)}=e_{q}^{(j)}+\nabla_{q}^w U^{(j+1)}-\\ 
\mbox{Soft}_{\Lambda} (\nabla_{q}^w U^{(j+1)}+e_{q}^{(j)}) =\mbox{Cut}_{\Lambda} (\nabla_{q}^w U^{(j+1)}+e_{q}^{(j)}),
\end{multline}
%\begin{multline}
%\label{ey}
%e_y^{(j+1)}=e_y^{(j)}+\nabla_y^w U^{(j+1)}-D_y^{(j+1)}=
%e_y^{(j)}+\nabla_y^w U^{(j+1)}-\\ \mbox{Soft}_{\Lambda} (\nabla_y^w U^{(j+1)}+e_y^{(j)}) =
%\mbox{Cut}_{\Lambda} (\nabla_y^w U^{(j+1)}+e_x^{(j)}).
%\end{multline}
 We remind that the Soft and the Cut operators apply point-wise  respectively as:
\begin{equation}
\label{Soft}
\mbox{Soft}_{\Lambda}(z)=\sign(z) \max \left\{ \abs{z}-{\Lambda},0\right \} 
\end{equation}
\begin{equation}
\label{Cut}
\mbox{Cut}_{\Lambda}(z)=z-\mbox{Soft}_{\Lambda} (z)= 
\left\{
\begin{tabular}{cc}
$\Lambda$&$z>{\Lambda}$\\%\frac{1}{\lambda}&\\
$z$&$-{\Lambda}\leq z \leq {\Lambda}$\\
$-{\Lambda}$&$z< -{\Lambda}$.\\
\end{tabular}
\right.
\end{equation}

By imposing first order optimality conditions in \eqref{UK},we compute the minimum $U^{(j+1)}$  by solving the following linear system
\begin{multline}
\label{SistL}
(\frac{1}{\beta}I-\theta \Delta^w)U^{(j+1)}= \frac{1}{\beta}\hat{v}^{(n)} + \theta  (\nabla_x^w)^T(D_x^{(j)}-e_x^{(j)})+\\
 \theta (\nabla_y^w)^T(D_y^{(j)}-e_y^{(j)})
\end{multline}

where 
\begin{equation}
\Delta^w=- \left ((\nabla_x^w)^T \nabla_x^w+(\nabla_y^w)^T \nabla_y^w \right ).
\label{eq:lapl}
\end{equation}

Defining: 
\begin{equation}
A=(I-\beta \theta \Delta^w)
\label{eq:matr}
\end{equation}
and
\begin{equation}
b^{(j)}=\hat{v}^{(n)} + \beta \theta  (\nabla_x^w)^T(D_x^{(j)}-e_x^{(j)})+\\
\beta \theta (\nabla_y^w)^T(D_y^{(j)}-e_y^{(j)})
 \label{eq:zj}
\end{equation}
the linear system \eqref{SistL} can be written as:
 \begin{equation}
 A U^{(j+1)} = b^{(j)},   
 \label{Slin1}
\end{equation}
Exploiting the structure of the matrix $A$ we obtain a matrix splitting of the form $E-F$ where $E$ is the Identity matrix
and $F$ is $\beta \theta \Delta^w$. We can prove that the iterative method, based on such a splitting, is convergent if 
\begin{equation}
 0 < \theta < \frac{1}{\beta \|\Delta^w \|. }
 \label{Theta}
\end{equation}
We report in Algorithm \ref{alg3} the details of its implementation.
\subsection{An efficient iterative method for the solution the linear systems \label{LS}}
In this paragraph we analyze the iterative method obtained by the matrix splittiong suggested by the structure of the matrix $A$ 
\eqref{eq:matr}and prove its convergence.
\begin{thm}
\label{th:t1}
 Let $E$ and $F$ define a splitting of the matrix $A=E-F$ in  \eqref{Slin1} as:
 \begin{equation}E = I, \ \ \ F= \beta  \theta \Delta^w.
\label{eq:split}
\end{equation}
  By choosing $\theta$ as in \eqref{Theta} we can prove that the spectral radius $\rho(E^{-1}F)<1$ and, for each right-hand side $B$, the following iterative method 
\begin{equation}
\label{solS}
X^{(m+1)}=F X^{(m)}+ B, \ \ m=0, 1, \ldots 
\end{equation}
 converges to the solution of the linear system $AX=B$.
\end{thm}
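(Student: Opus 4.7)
My plan is to reduce the claim to a spectral bound on $F$ and then invoke the standard convergence theorem for stationary iterations. Since $E = I$, we have $E^{-1}F = F = \beta\theta\Delta^w$, so the whole statement $\rho(E^{-1}F) < 1$ is equivalent to $\rho(\beta\theta\Delta^w) < 1$, i.e.\ to $\beta\theta\,\rho(\Delta^w) < 1$.

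The next step is to control $\rho(\Delta^w)$ in terms of the norm $\|\Delta^w\|$ appearing in \eqref{Theta}. From the definition \eqref{eq:lapl}, $\Delta^w = -((\nabla_x^w)^T\nabla_x^w + (\nabla_y^w)^T\nabla_y^w)$ is manifestly symmetric and negative semidefinite. For a symmetric matrix the spectral norm coincides with the spectral radius, so $\rho(\Delta^w) = \|\Delta^w\|_2$. Therefore the hypothesis \eqref{Theta}, namely $0 < \theta < 1/(\beta\|\Delta^w\|)$, yields
\begin{equation*}
\rho(E^{-1}F) \;=\; \beta\theta\,\rho(\Delta^w) \;=\; \beta\theta\,\|\Delta^w\| \;<\; 1.
\end{equation*}
As a byproduct, the eigenvalues of $A = I - \beta\theta\Delta^w$ are bounded below by $1$, so $A$ is invertible and the linear system $AX = B$ admits a unique solution $X^\ast = A^{-1}B$.

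It then remains to quote the classical fact about stationary iterations: if $\rho(E^{-1}F) < 1$, then for any starting point $X^{(0)}$ the sequence generated by $X^{(m+1)} = E^{-1}(FX^{(m)} + B) = FX^{(m)} + B$ converges to the unique fixed point of $X = FX + B$, which is exactly $X^\ast = A^{-1}B$. One standard way to present this is to define $e^{(m)} = X^{(m)} - X^\ast$, observe that $e^{(m+1)} = Fe^{(m)}$, and conclude $\|e^{(m)}\| \to 0$ from $\rho(F) < 1$ (using any norm equivalent to a spectral-radius-adapted norm obtained from Jordan form).

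I do not expect a serious obstacle here: the only substantive point is the identification $\rho(\Delta^w) = \|\Delta^w\|_2$, which requires the symmetry of $\Delta^w$ as displayed above, and a minor clarification that $\|\cdot\|$ in \eqref{Theta} is meant in the spectral-norm sense consistent with this identity. Everything else is bookkeeping: writing out $E = I$, multiplying by $\beta\theta$, and appealing to the standard convergence theorem for linear fixed-point iterations.
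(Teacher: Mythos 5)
Your proof is correct, but it follows a genuinely different route from the paper. You exploit that $E=I$ makes the iteration matrix simply $F=\beta\theta\Delta^w$, note from \eqref{eq:lapl} that $\Delta^w$ is symmetric (negative semidefinite), and bound $\rho(E^{-1}F)=\beta\theta\,\rho(\Delta^w)$ by $\beta\theta\,\|\Delta^w\|<1$ under \eqref{Theta}, after which convergence is the standard fixed-point theorem for stationary iterations. The paper instead invokes the Householder--Johns criterion for P-regular splittings: $\rho(E^{-1}F)<1$ iff $A$ is symmetric positive definite and $E^*+F$ is symmetric positive definite, and it establishes the latter through Lemma \ref{lm:l1}, which writes out the five-point stencil of $\Delta^w$ explicitly and shows that $M=I+\beta\theta\Delta^w$ is strictly diagonally dominant precisely when $0<\theta<1/(\beta\|\Delta^w\|_\infty)$. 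Your argument is more elementary and shorter, since with $E=I$ the P-regular splitting machinery is not really needed; the paper's route pays off in that it produces the explicit entrywise structure of $\Delta^w$ and the concrete $\|\Delta^w\|_\infty$ constant actually used in Algorithm \ref{alg2}, and it would carry over to splittings with a nontrivial $E$. One small adjustment to your write-up: \eqref{Theta} is used in the paper (Lemma \ref{lm:l1} and Algorithm \ref{alg2}) with the $\infty$-norm, so rather than insisting that $\|\cdot\|$ be the spectral norm so that $\rho(\Delta^w)=\|\Delta^w\|$ holds with equality, it is cleaner to use the inequality $\rho(\Delta^w)\leq\|\Delta^w\|$, valid for any induced norm (and in particular for $\|\cdot\|_\infty$ via symmetry or directly); this gives $\rho(E^{-1}F)\leq\beta\theta\|\Delta^w\|<1$ and the conclusion is unchanged.
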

\begin{proof}
See Appendix \ref{A_B}.
\end{proof}

Hence we compute the Weighted Split Bregman solution $U^{(j+1)}, j=0,1,\ldots$ by means of the iterative method defined in \eqref{solS}
with $B = b^{(j)}$ as in \eqref{eq:zj}.

Substituting \eqref{eq:lapl} in \eqref{solS} we have:
\begin{equation}
X^{(m+1)}= - \beta  \theta  \left ((\nabla_x^w)^T \nabla_x^w+(\nabla_y^w)^T \nabla_y^w \right )X^{(m)}+ b^{(j)}.
\label{eq:eq_X}
\end{equation}
 By  substituting  \eqref{eq:zj} in \eqref{eq:eq_X} and  collecting $\nabla_x^w$, $\nabla_y^w$ we obtain :
\begin{multline}
 X^{(m+1)}= \hat{v}^{(n)} + \beta \theta \left [ (\nabla_x^w)^T(-\nabla_x^w X^{(m)} + D_x^{(j)}-e_x^{(j)})+ \right . \\
 \left . +(\nabla_y^w)^T(-\nabla_x^w X^{(m)}+ D_y^{(j)}-e_y^{(j)}) \right ]\label{eq:iterations}
\end{multline}

\subsubsection{Implementation notes \label{impl}} 

We further make some simple algebraic handlings with the aim of avoiding the explicit computation of 
$D_x^{(j)}$ and $D_y^{(j)}$ in \eqref{eq:iterations}. 

Subtracting \eqref{Dx} to \eqref{ex}  and using relation \eqref{Cut} we deduce that
$$\mbox{Soft}_{\Lambda} (\nabla_{q}^w U^{(j+1)}+e_{q}^{(j)})= (\nabla_{q}^w U^{(j+1)}+e_{q}^{(j)})-\mbox{Cut}_{\Lambda} 
\left(\nabla_{q}^w U^{(j+1)}+e_{q}^{(j)}\right).$$
%and 
%$$\mbox{Soft}_{\Lambda} (\nabla_y^w U^{(j+1)}+e_y^{(j)})= (\nabla_y^w U^{(j+1)}+e_y^{(j)})-\mbox{Cut}_{\Lambda} 
%\left(\nabla_y^w U^{(j+1)}+e_y^{(j)}\right)$$
Immediately it follows that
\begin{equation}
\label{Ottx}
e_{q}^{(j+1)}-D_{q}^{(j+1)}=2 e_{q}^{(j+1)}- (\nabla_{q}^w U^{(j+1)}+e_{q}^{(j)})
\end{equation}
Using \eqref{Ottx} in \eqref{eq:iterations}  we 
can  define  a more efficient update formula:
\begin{multline}
 X^{(m+1)}= \hat{v}^{(n)} - \beta \theta \left [ (\nabla_x^w)^T(\nabla_x^w X^{(m)} +2e_x^{(j)}-(\nabla_x^w U^{(j)}+e_x^{(j-1)})) \right .\\
 \left . +(\nabla_y^w)^T(\nabla_x^w X^{(m)}+ 2e_y^{(j)}-(\nabla_y^w U^{(j)}+e_y^{(j-1)})) \right ]
\label{eq:it_new}
\end{multline}

In Algorithm \ref{alg3} we report the function (fast\_split) for the solution of the Backward step \eqref{B}.
The  output variable  $U^{(j)}$ is the computed solution and $\bar m$ is the number of total performed iterations.
\begin{table}[!htbp]
\begin{algo}%[\textsc{Input: $y$,  $\lambda_0$, $r_{\lambda}>0$, $\mu>0$; Output: }]
\begin{center}
\begin{tabular}{c}\hline
\textbf{[$U^{(j)}$, $\bar m$]=fast\_split($\lambda, \theta, \beta,{\hat v}^{(n)},w^x,w^y$)}\\
 \hline 
\begin{minipage}[c]{11cm}
\begin{tabular}{ll}
$\Lambda=\frac{\lambda}{\theta}$ , $\bar m=0$, \\
 $U^{(0)}={\hat v}^{(n)}$, $e_x^{(0)}=e_y^{(0)}=0$;\\
\quad  $j=1$\\
\quad \quad {\it repeat } &\\
\quad \quad \quad $U_x={\nabla_x^w} U^{(j-1)}$; $U_y={\nabla_y^w} U^{(j-1)}$; & \\
\quad \quad \quad $X_x^{(0)}=U_x$; $X_y^{(0)}=U_y$ & \\
 \quad  \quad \quad$z_x= U_x+e_x^{(j-1)}$; $z_y= U_y+e_y^{(j-1)}$; &\\
 \quad \quad \quad $e_x^{(j)}=Cut_{\Lambda}(z_x)$; $e_y^{(j)}=Cut_{\Lambda}(z_y)$; &\\
 \quad\quad \quad $m=0$\\
\quad \quad \quad {\it (Solution of problem \eqref{Slin1}) }\\
\quad \quad \quad {\it repeat } &\\
 \quad \quad \quad \quad $X^{(m+1)}={\hat v}^{(n)}- \beta \theta\left({\nabla_x^w}^T(X_x+2 \cdot e_x^{(j)}-z_x)+
{\nabla_y^w}^T (X_y +2 \cdot e_y^{(j)}-z_y)\right)$&\\
 \quad   \quad \quad \quad $X_x={\nabla_x^w} X^{(m+1)}$; $X_y={\nabla_y^w} X^{(m+1)}$&\\
 \quad  \quad \quad \quad  $m=m+1$&\\
 \quad  \quad \quad {\it until {\tt  stopping condition \eqref{eq:stop_sb}}}&\\
 \quad \quad \quad$U^{(j+1)}=X^{(m)}$;&\\
 \quad \quad \quad $\bar m=\bar m+m;j=j+1$&\\
\quad \quad {\it until {\tt stopping condition \eqref{eq:stop_sb}}}&\\
\end{tabular}
\end{minipage} \\ \hline
\end{tabular}
\end{center}
 \label{alg3}
\end{algo} 
\caption{ fast\_split Algorithm for the solution of problem \eqref{B}}
\end{table}
%%%%%%%%%%%%%%%%%%%%%%%%%%%%%%%%%%%%%%%%%%%%%%%%%%%%%%%%%%%%%%%%%%%%%%%%%%%%%%%%%%%%%%%%%%%%%%%%%%%%%%%%%%%%%%%%%%%%%%%%%%%%%%%%%%%%%%%%%%%%%%%%%%%%%%%%%%%%%%%%%%%%%%%%%%%%%

The  stopping condition of both the loops (with index $j$ and $m$) is defined on the basis of the relative tolerance parameter $\tau$ as follows:
\begin{equation}
\|w^{(k+1)}-w^{(k)}\| \leq \tau \| w^{(k))} \|
 \label{eq:stop_sb}
\end{equation}
where $w^{(k)} \equiv U^{(j)}$ in the outer loop ($k\equiv j$)  and $w^{(k)} \equiv X^{(m)}$ in the inner loop ($k\equiv m$).
\subsection{Final remarks}
Collecting all the results obtained in the previous paragraphs we  
report the whole scheme of  FCNR in algorithm \ref{alg2} (Table \ref{FNCR}).
In output we have  the computed solution $u^{(\ell)}$, the number $\bar n$ of external iterations 
(the iterations of the continuation procedure with index $\ell$) 
the vector $tot\_it$ and its length $\bar n$. For each index $\ell$,  $tot\_it(\ell)$ reports the number of Forward-Backward iterations performed.

In practice, we obtain a very fast algorithm by performing a few steps of the IR$\ell$1 algorithm for computing  the sequence $\bar u^{(h)}$. Despite the inexact approximation of $u^{(\ell)}$, the final solution is accurate and efficiently computed.

%--------------------------------------------------------------------------------------------------------------------------
\vspace{5mm}
\begin{table}[!htbp]
\begin{algo}[\textsc{FNCR Algorithm}]
\begin{center}
\begin{tabular}{c}\hline
 \textbf{Input: $r_0,z,\beta$,  Output: $u^{(\ell)},tot\_it,\bar n$
}\quad \quad \quad \quad\quad \quad \quad \quad    \\ 
 \hline 
\begin{minipage}[c]{11cm}
\begin{tabular}{ll}
$ u^{(0)}=\phi^T z, \, w^x=1,\, 
w^y=1, \,\lambda_0=r_0\|u^{(0)}\|_1, \mu=\| \nabla u^{(0)}\|_1$\\[1pt]
$\theta=\frac{0.8}{\beta \| \Delta ^w\|_{\infty}}$ \\ [1pt]
$\tilde u^{(0)}=\hat u^{(0)}=u^{(0)}$;\\[1pt] 
$\ell=0$ \\[1pt]
$\bar n=0$ \\
{\tt repeat}   \ \ \\[2pt] % $\ell index 
\quad  $h=0$, \\[2pt] 
\quad $tot\_it(\ell)=0$ \\[2pt] 
\quad {\tt repeat} \\[2pt]  % h index
\quad  $n=0$\\
\quad {\tt repeat} \\[2pt]
  \quad\quad   $ {\hat v^{(n+1)}}=\hat u^{(n)}+ \beta \Phi^T(z-\Phi \hat u^{(n)}) $ (Forward Step)  \\%[2pt]
%    \quad\quad (Backward Step computed by Algorithm \ref{alg3}): \\[2pt]
\quad \quad [${\tilde u}^{(n+1)}$,it]={\bf fast\_split}($\lambda_{h},\theta,\beta,\hat v^{(n+1)},w^x,w^y$) \\ 
\quad\quad compute $\alpha$ as in \eqref{eq:alpha}  \\
\quad \quad $\hat u^{(n+1)}={\tilde u}^{(n+1)}+\alpha({\tilde u}^{(n+1)}-{\tilde u}^{(n)})$\\[2pt]
 \quad\quad $ n=n+1$ \\[2pt]
 \quad \quad tot\_it($\ell$)=tot\_it($\ell$)+it  (number of FB iterations) \\[2pt]
  \quad {\tt until stopping condition}  as in \eqref{stop} \\[2pt]
\quad $\bar n=\bar n+n$ \\[2pt]
\quad Update $\lambda_{h+1}$ as in \eqref{eq:lambda} \\[2pt]
\quad ${\bar u}^{(h+1)}= {\hat u}^{(n)}$ \\
\quad  Weights updating $w^x,w^y$ as in \eqref{eq:weights}\\[2pt]
\quad ${\hat u}^{(0)}={\hat u}^{(n)}$   \quad (Warm starting) \\[2pt] %% u_{n+1}^k
\quad $h=h+1$ \\
\quad {\tt until convergence} \\
$\mu=0.8 \mu $ \\[2pt]
$u^{(\ell+1)}=\bar u^{(h)}$ \\[2pt]
 \quad $\ell=\ell+1$ \\
{\tt until convergence} \\
\end{tabular}
\end{minipage}\\ \hline
\end{tabular}
\end{center}
 \label{alg2}
\end{algo} 
\caption{FNCR Algorithm.}
\end{table}
%-------------------------------------------------------------------------------------------
% \input{numexp19}
 \section{Numerical Experiments \label{numres}}
In this section we report the results of several tests run on simulated under-sampled 
data obtained by synthetic (phantoms) and  full resolution MRI images.
The   under-sampled data $z$ are obtained as $z=\Phi u$ where $u$ is the full resolution  image  and
$\Phi$ is the under-sampling Fourier matrix,  obtained as in \eqref{eq:mask}.
The  under-sampling masks,  analyzed in the  next paragraphs are: radial mask ($\mathcal{M}_1$), parallel mask ($\mathcal{M}_2$) and random mask ($\mathcal{M}_3$).
In figure \ref{fig:uno} we represent  an example of each mask with low sampling rate $S_r$, 
measured by the percentage ratio between the number of non-zero pixels $N_{\mathcal{M}}$ and the total number of pixels $N^2$:
\begin{equation}
S_r=\frac{N_{\mathcal{M}}}{N^2} \%.
\label{eq:Sr}
\end{equation}
The quality of the reconstructed  image $u$ is evaluated by means of the Peak Signal to Noise Ratio (PSNR)
$$ PSNR=20 \log_{10}   \frac{\max(x)}{rmse}, \ \ \ \ rmse=\sqrt{\frac{\sum_i \sum_j ( u_{i,j}-x_{i,j})^2}{N^2}} $$
where $rmse$ represents the root mean squared error and $x$ is the true image.
%rmse(passog)= sqrt(sum(sum(abs(I-POri).^2))/(N^2));
%In paragraph \ref{alg_perf} we examine the efficiency of FNCR algorithm to recover good quality images from 
%highly under-sampled data.
%A comparison to the most representative  algorithms is reported in paragraph \ref{comp}.
All the algorithms are implemented in Matlab R16 on a PC equipped with Intel 7 processor and 8GB Ram.

In the first two paragraphs (\ref{alg_perf}, \ref{noise_par}) we test FNCR on synthetic data in order to asses
its performance in case of critical under-sampling both on noiseless and noisy data.
Finally in paragraph \ref{comp} data from real MRI images are used to compare FNCR to one of the most efficient 
reconstruction algorithms \cite{IL1MRI}.
\subsection{Algorithm performance \label{alg_perf} on noiseless data}
In this paragraph we test the performance of FNCR algorithm in reconstructing good quality images from 
highly under-sampled data. We  focus on two synthetic images: the \texttt{Shepp-Logan} phantom (T1) (figure \ref{fig:f1}(a)), widely used 
in algorithm testing, and the \texttt{Forbild} phantom (T2) \cite{Yu2012} (figure \ref{fig:f1}(b)), well known as a very difficult test problem.

In this first set of tests we  stop  the outer iterations of the FNCR algorithm \ref{alg2}
 as soon as $PSNR \geq 100$ or  $\bar n > 5000$. 
The value $PSNR=100$ indicates an extremely good quality reconstruction since our test
images are scaled in the interval $[0,1]$, therefore  $rmse \simeq 10^{-5}$.

In table \ref{tab:tabPhi_all} we report the results obtained for the different test images and masks.
In column $1$ the type of mask is reported, column $2$ contains the sampling
rate $S_r$ of each test (in brackets the number of rays or lines for masks 
$\mathcal{M}_1$ or $\mathcal{M}_2$, respectively), finally for each test image,
T1 (columns $3-4$)  and T2 (columns $5-6$) we report  the PSNR of the starting image $\Phi^t z$ ($PSNR_0$) and 
the value $\bar n$, computed as in Algorithm \ref{alg2} and measuring the computational cost.
When the algorithm stops because the maximum number of iterations has been reached ($\bar n=5000$), we report between brackets the last PSNR obtained.
\begin{table}[!htbp]
\centering
\begin{tabular}{c c c l c l}
\hline
\multicolumn{1}{l}{} & \multicolumn{1}{l}{} & \multicolumn{2}{c}{T1} & \multicolumn{2}{c}{T2} \\
 & $ S_r $ & $PSNR_0$ &  $\bar n(PSNR)$ &  $PSNR_0$ &  $ \bar n (PSNR)$  \\
\hline
 $\mathcal{M}_1$   & $3\%(7)$      & 15.7 & 4500 & 15.02  & 5001 (28.4)\\
            & $5.7\%(12)$                  & 16.8 & 339 & 16.93  & 378\\
            & $10.7\%(18)$                   & 18.8 & 190 & 18.69  & 233\\
            & $25 \%(60)$                    & 22.6 &  96 &  21.8  & 131\\
\hline
 $\mathcal{M}_2$   & $6.3\% (16)$  & 14.6 & 1309 &12.8  & 5001(29.95)\\
            & $12.5\%(32) $      & 16.2 & 778 &  13.9 & 386 (30.9)\\
            & $25 \%(64)$ &  & 209   &  19.1 & 121 18.5\\
\hline
 $\mathcal{M}_3$   &  $2 \%$   & 16.2 & 419 & 15.98  & 5001 (31.7)\\
            & $12\%$     & 17.1 & 106 &  16.7 & 128\\
            & $25 \%$  & 18.3 & 82  &  18.6 & 114\\
\hline
\end{tabular}
\caption{FNCR results with masks $\mathcal{M}_1-\mathcal{M}_3$ on noiseless data. 
$S_r$ is the Sampling Ratio as in \eqref{eq:Sr},
$PSNR_0$ refers to the PSNR of the starting image $\Phi^t z$,
$\bar n$ is the total number of Forward-Backward iterations in Algorithm \ref{alg2}. In brackets the number of rays or lines for masks 
$\mathcal{M}_1$ or $\mathcal{M}_2$, respectively. }
\label{tab:tabPhi_all}
\end{table}
We observe that T1 always reaches a PSNR value greater than  100  even with very severe under-samplings. 
The T2  confirms to be a very difficult test problem especially in case of $\mathcal{M}_2$ under-sampling mask.
%In the case of random mask ($\mathcal{M}_3$) Forbild reaches $PSNR=100.8$ for $Sr = 3\%$.

We conclude our analysis of the FNCR performance with a few observations about the input parameters 
of Algorithm \ref{alg2}.
In table \ref{tab:par0} we report  the values of $r_0$, $\gamma$ and $\beta$ used in the present reconstructions and
 notice that  the parameters $r_0$ and $\gamma$ depend on the mask.
We observe that the algorithm uses the same parameters for  $\mathcal{M}_2$ and $\mathcal{M}_3$, 
  while it requires smaller values of  $r_0$ and $\gamma$ for 
 $\mathcal{M}_1$.
 The parameter $\beta$ is mainly affected by the data noise and in the present experiments $\beta=1$ gives always 
 the best results.
\begin{table}[h]
\centering
\begin{tabular}{c c c c c c}
\hline
 & $r_0$ & $\gamma$ & $\beta$ \\
\hline
 $\mathcal{M}_1$  & $10^{-4}$ & $5 \cdot 10^{-2}$ & 1 \\
 $\mathcal{M}_2$,$\mathcal{M}_3$   & $5 \cdot 10^{-2}$ & $5 \cdot 10^{-1}$ & 1 \\
\hline
\end{tabular}
\caption{FCNR input parameters for noiseless data.}
\label{tab:par0}
\end{table}

%\subsubsection{Some considerations on the linear system solution \label{system}}
Concerning the value of the tolerance parameter $\tau$ in the stopping criterium \eqref{eq:stop_sb} we  analyze here how it affects the efficiency 
 of the proposed  matrix splitting method \eqref{eq:it_new} both
in terms of accuracy and computational cost. 
 %Hence very few iterations of the iterative method are necessary to solve the linear system \eqref{Slin1} with good accuracy.

As  representative examples we report  the results obtained with $\tau \in [10^{-4}, 10^{-1}]$ 
in the case of radial sampling $\mathcal{M}_1$ with $S_r=3\%$ for both T1 and T2.
As shown in figure \ref{fig:PSNR_tol_SL}, where different lines represent the $PSNR$ obtained as a function of the iterations with different values of $\tau$,
we have two  possible situations: in the 
T1 test  (figure \ref{fig:PSNR_tol_SL}(a)) the  PSNR value 100  is always obtained, while in 
the second case (figure \ref{fig:PSNR_tol_SL}(b)) the  $PSNR$ never gets the expected value  100.
Regarding the computational cost, 
we plot in figure \ref{fig:ITER_tol} the number $tot\_in$ of the total iterations required in step $\ell$ (as computed in Algorithm \ref{alg2}) as a function of the iterations, using different lines for different values of $\tau$.  
We note that the computational cost is minimum when $\tau \geq 10^{-3}$ and greatly increases when  $\tau\leq 10^{-4}$, hence a small tolerance parameter $\tau$ is never convenient since it increases the computational cost
and it does not help to improve  the final PSNR.
We can conclude that  very few iterations of the iterative method are sufficient to solve the linear system \eqref{Slin1} with good accuracy.

Therefore, all the  experiments reported in the present section are obtained using  $\tau = 0.1$. 
In this case only $4$ inner iterations $\bar m$ are required by  Algorithm \ref{alg3} for each FB step and
the total computational cost of FNCR is  $\bar n \cdot 4$. 

%tot\_it
%------------------------------------------------------------------------------------------------
\begin{figure}
\begin{center}
\subfigure[$\mathcal{M}_1, S_r=3\%$]{
 \includegraphics[width=5cm,height=5cm]{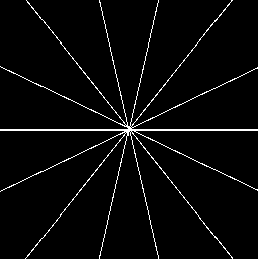}}
 \subfigure[$\mathcal{M}_2, S_r=6\%$]{
  \includegraphics[width=5cm,height=5cm]{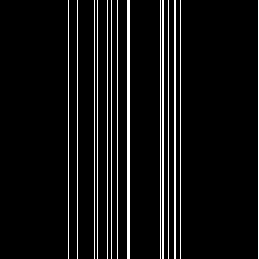}}
  \subfigure[$\mathcal{M}_3, S_r=2\%$]{
  \includegraphics[width=5cm,height=5cm]{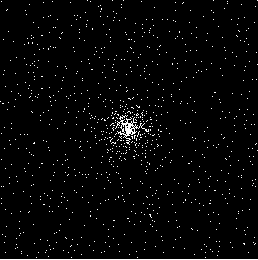}}
\end{center}
\caption{Sampling Masks}
\label{fig:uno}
\end{figure}
%-------------------------------------------------------------------------------------------------------
\begin{figure}
 \centering
 \includegraphics[width=6cm,height=6cm]{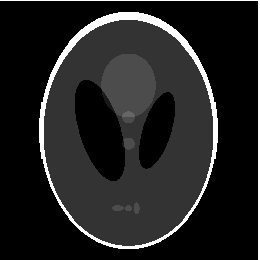}
 \includegraphics[width=6cm,height=6cm]{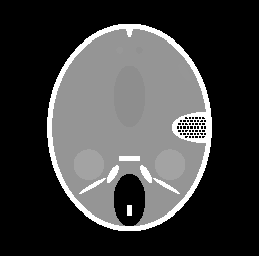}\\
 (a) \hspace{5cm}    (b)\\
 % T1_M2_noise.png: 0x0 pixel, 300dpi, 0.00x0.00 cm, bb=
 \caption{Test Images. (a) Shepp Logan (T1), (b) Forbild (T2) }
 \label{fig:f1}
\end{figure}
\begin{figure}[h]
 \centering
 \hspace{-10mm}  \subfigure[T1 test]{
 \includegraphics[width=6cm,height=5cm]{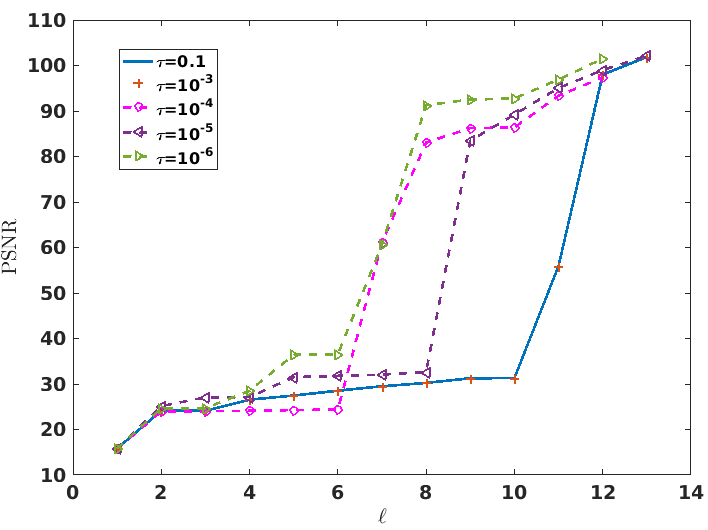}}
   \subfigure[T2 test]{
 \includegraphics[width=6cm,height=5cm]{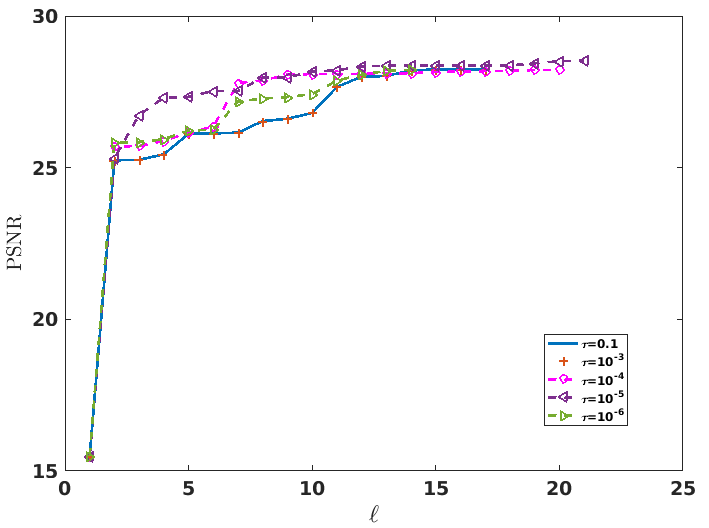}}
 % tol_SL_psnr.png: 0x0 pixel, 300dpi, 0.00x0.00 cm, bb=
 \caption{PSNR vs. the iterations $\ell$,  for different tolerance parameters $\tau$ in \eqref{eq:stop_sb}. Mask $\mathcal{M}_1$ with $S_r=3\%$. 
 Tolerances $\tau=0.1$ and $\tau=10^{-3}$ are represented 
by blue line and red crosses;   $\tau=10^{-4}$ is represented by magenta line and circles; $\tau=10^{-5}$ is represented by 
cyan line and left triangles finally $\tau=10^{-6}$ is represented by 
green line and right triangles.
  }
\label{fig:PSNR_tol_SL}
 \end{figure}
 
\begin{figure}[h]
 \centering
 \hspace{-10mm}   \subfigure[T1 test ]{
 \includegraphics[width=6cm,height=5cm]{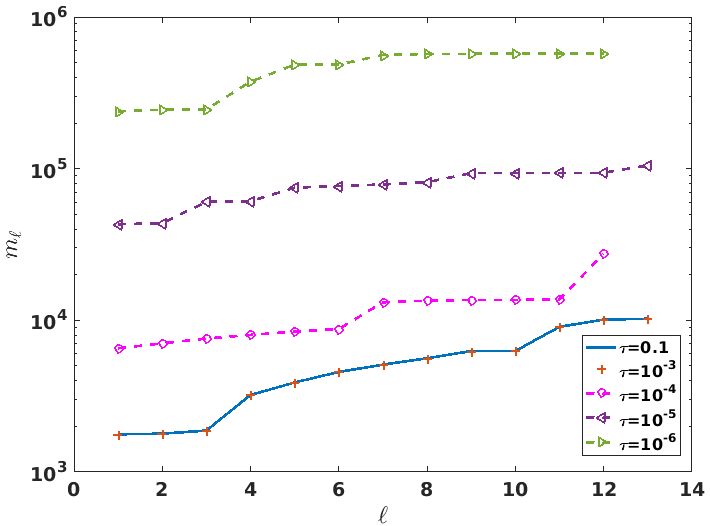}}
  \subfigure[T2 test]{
 \includegraphics[width=6cm,height=5cm]{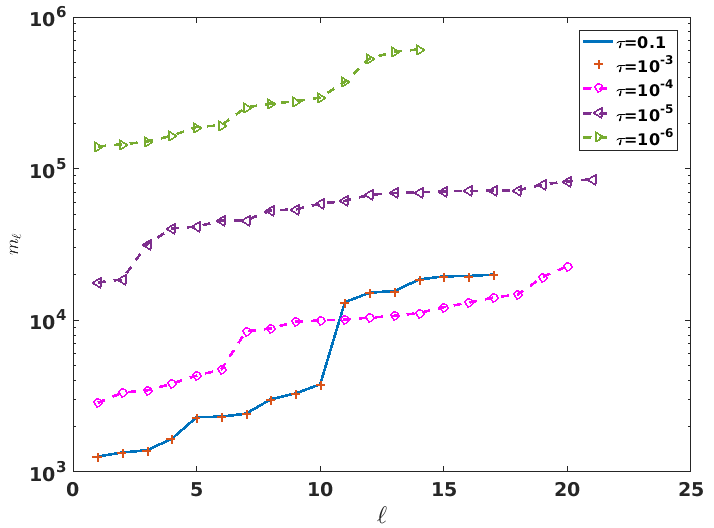}}
 % tol_SL_psnr.png: 0x0 pixel, 300dpi, 0.00x0.00 cm, bb=
 \caption{ $Tot\_in$ vs. the iterations $\ell$ for different tolerance parameters $\tau$ in \eqref{eq:stop_sb}. 
Mask $\mathcal{M}_1$ with $S_r=2\%$. Left: Shepp-Logan test. Tolerances $\tau=0.1$ and $\tau=10^{-3}$ are represented 
by blue line and red crosses;   $\tau=10^{-4}$ is represented by magenta line and circles; $\tau=10^{-5}$ is represented by 
cyan line and left triangles finally $\tau=10^{-6}$ is represented by 
green line and right triangles.
}
\label{fig:ITER_tol}
 \end{figure}
%-------------------------------------------------------------------------------------------
% \input{numexp_noise19}
\subsection{Noise Sensitivity \label{noise_par}}
In this paragraph, we explore the noise sensitivity of the proposed FNCR method by reconstructing the test images from noisy data
with different under-sampling masks.
%At first we apply FNCR to noisy
%data obtained by the {\tt forbilt} phantom with the radial undersampling used in table \ref{tab:tab1}.
%when compared with the other three methods. 
The simulated noisy data $z^{\delta}$ are obtained by adding  to the undersampled  data $z$ normally distributed noise 
of  two different levels $\delta = 10^{-3}, 10^{-2}$:
\begin{equation}
z^{\delta} = z + \delta \| z \| v, \ \ \ \| v \| =1
\label{eq:noise}
\end{equation}
where  $v$ is a unit norm random vector obtained by the Matlab function {\tt randn}.
Regarding the FNCR parameters, we used the same values as in table \ref{tab:par0}
for $\mathcal{M}_2$ and $\mathcal{M}_3$ masks, whereas  we used $r_0=10^{-2}$ and $\gamma=0.2$ for mask $\mathcal{M}_1$.

In table  \ref{tab:tabPhi_all_noise} we report the best results obtained by the FNCR algorithm in terms of $PSNR$ and number $\bar n$ of FB iterations. 
%The best $PSNR$ and number of FB iterations $\bar n$ are reported in 
%table \ref{tab:tabPhi_all_noise} for both test images and different undersamplings.

% we report  the reconstructions obtained by FNCR in the same cases as in table \ref{tab:tab1}.
Compared to the results obtained in the noiseless case (table \ref{tab:tabPhi_all}), we observe that  the 
$PSNR$ value is smaller but still acceptable ($>30$) in most cases for the noise level $\delta= 10^{-3}$. 
For the higher noise level  $\delta= 10^{-2}$, the reconstructions are more difficult, especially for low sampling rates.

Concerning the radial sampling $\mathcal{M}_1$ and random sampling $\mathcal{M}_3$, the Forbild data (T2) are badly reconstructed for $S_r=4.3\%$
and $S_r = 3\%$ respectively, 
where the Shepp-Logan data  (T1) always reach $PSNR > 30$. 
The most critical problem is the parallel under-sampling $\mathcal{M}_2$ with $\delta =10^{-2}$, in this case 
both T1 and T2 reach $PSNR > 30$ only  with $S_r \geq 25\%$. 
Moreover  $\mathcal{M}_2$ with $\delta =10^{-3}$ is critical only for Forbild data (T2) with $S_r = 6.3\%$. 
Finally,  for each test (T1, T2) we report in figures  \ref{fig:T1_noise} and \ref{fig:T2_noise}
the images whose $PSNR$ values are  
typed in bold  in table \ref{tab:tabPhi_all_noise}. 
We can appreciate the overall good  quality of the reconstructions from noisy data.
%-----------------------------------------------------------------------
\begin{table}[!htbp]
\centering
\begin{tabular}{c c c c c c}
\hline
\multicolumn{1}{l}{} & \multicolumn{1}{l}{} &  \multicolumn{1}{l}{} & \multicolumn{1}{c}{T1} & \multicolumn{1}{c}{T2} \\
 & $ \delta $ & $S_r$ &  $PSNR(\bar n)$ &  $PSNR(\bar n)$  \\
\hline
 $\mathcal{M}_1$   & $10^{-3}$ & $4.3\%$    &   59.61 (130) & 30.85 (192)\\
            &           & $7.7\%$   &   66.23 (72) & 60.78 (66 ) \\ 
            &           & $10.7\%$    &   68.61 (57) & 62.84(807)  \\
            & $10^{-2}$ & $4.3\%$     &   39.5 (461) & 24.56 (44) \\
            &           & $7.7\%$    &   43.69 (35) & 36.82 (117) \\
            &           & $10.7\%$   &   46.36 (27) & {\bf 41.06} (405)\\ 
\hline
 $\mathcal{M}_2$   & $10^{-3}$ & $6.3\%$               &   34.08 (1210) &  20.2 (768)\\
            &           & $12.5\%$              &    55.95 (590) &  30.81 (228) \\ 
            &           & $25\%$                &    69.75 (68)  &  34.76 (67) \\
            & $10^{-2}$ & $6.3\%$               &    22.5 (962) &  17.93 (578)\\
            &           & $12.5\%$              &   26.87 (60) &  {\bf 22.54} (63) \\
            &           & $25\%$                &  {\bf 52.7} (52)&  33.04 (44)\\ 
\hline
 $\mathcal{M}_3$   & $10^{-3}$ & $3\%$      &   64.4 (462)   & 53.5(485)\\ %$2\%$  &  25.68 (325) \\
            &           & $12\%$     &   64.81 (242) & 62.45 (63) \\ 
            &           & $25\%$   &  71.45 (58) & 65.27 (241)   \\
            & $10^{-2}$ & $3\%$     &  33.54 (134) & 23.55 (418)\\ %$2\%$  & 24.7 (134) \\
            &           & $12\%$     &     {\bf 43.29} (37) & 42.99 (45) \\
            &           & $25\%$    &   50.57 (42)& 45.72 (225)\\ 
\hline            
\end{tabular}
\caption{FNCR results with masks $\mathcal{M}_1-\mathcal{M}_3$ on noisy data. $\delta$ is the noise level \eqref{eq:noise}, 
$S_r$ is the Sampling Ratio \eqref{eq:Sr} and 
$\bar n$ is the total number of Forward-Backward iterations in Algorithm \ref{alg2}.}
\label{tab:tabPhi_all_noise}
\end{table}
\begin{figure}
 \centering
 \includegraphics[width=6cm,height=6cm]{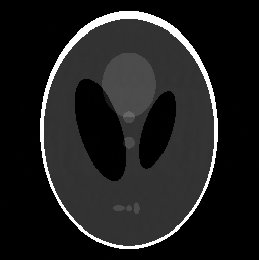}
 \includegraphics[width=6cm,height=6cm]{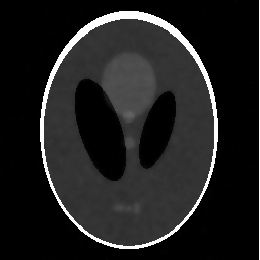}\\
 (a) \hspace{5cm}    (b)\\
 % T1_M2_noise.png: 0x0 pixel, 300dpi, 0.00x0.00 cm, bb=
 \caption{Images reconstructed from T1 noisy data. (a) $PSNR=52.7$ (b) $PSNR=43.29$. }
 \label{fig:T1_noise}
\end{figure}
\begin{figure}
 \centering
 \includegraphics[width=6cm,height=6cm]{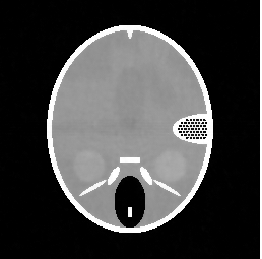}
 \includegraphics[width=6cm,height=6cm]{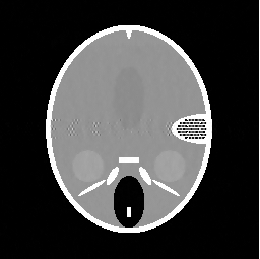}\\
 (a) \hspace{5cm}    (b)\\
 % T1_M2_noise.png: 0x0 pixel, 300dpi, 0.00x0.00 cm, bb=
 \caption{Images reconstructed from T2 noisy data. (a) $PSNR=41.06$  (b)$PSNR=22.54$. }
 \label{fig:T2_noise}
\end{figure}
%-------------------------------------------------------------------------------------------
% \input{numexp_comp19}

\subsection{Comparison with other methods \label{comp}}
In this paragraph we compare the FNCR algorithm with the very efficient algorithm  IL$_1-\ell_q$
recently proposed in the literature \cite{IL1MRI}, 
where the authors minimize a class
of concave sparse metrics $\ell_q (q=0.5)$ in a general DCA-based CS framework.
Both synthetic and real MRI data are tested.
In \cite{IL1MRI} the authors test their method on T1 image with radial mask $\mathcal{M}_1$ and they show that a perfect ($PSNR\geq 100$) can be obtained with 7 rays. 
Performing the same test  FNCR also reaches  $PSNR \geq 100$ in comparable times.

Then we applied both methods to the
 T2 test image both methods with mask $\mathcal{M}_1$ and we obtained perfect reconstructions with $10$ sampled projections, while
decreasing the projections to $8$  IL$_1-\ell_q$ obtains $PSNR=28.7$ and FNCR reaches $PSNR=98.1$%$PSNR=95.69$ 
(see Figure \ref{fig:T2_8r}). 
\begin{figure}
 \centering
 \includegraphics[width=6cm,height=6cm]{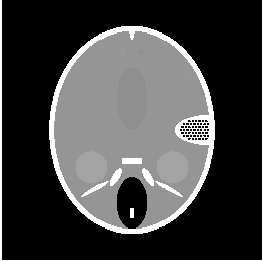} 
  \includegraphics[width=6 cm,height=6 cm]{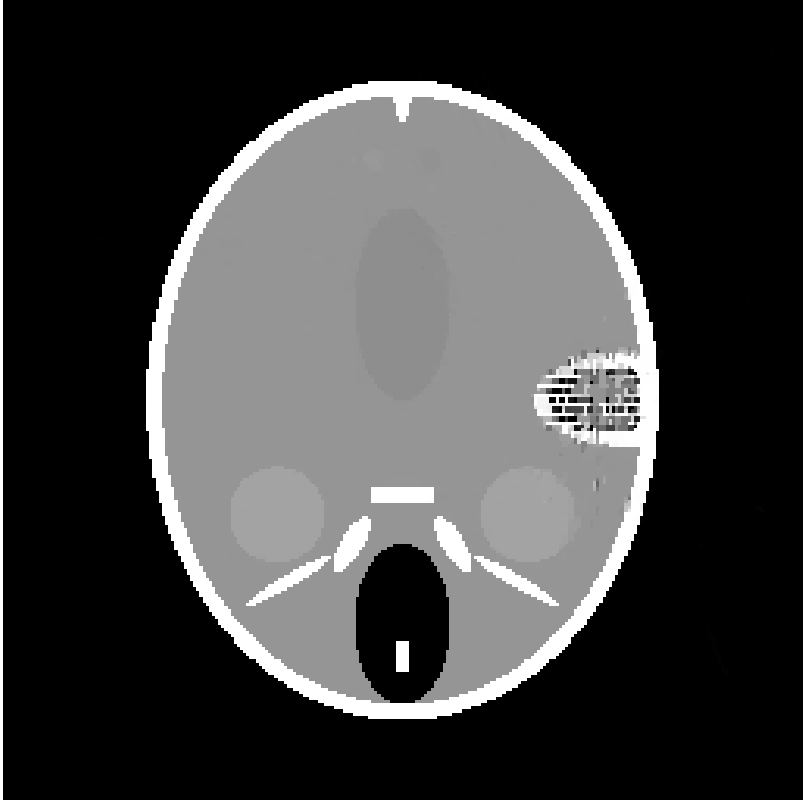}\\
 (a) \hspace{5cm}    (b)\\
 % fig_T2_FNCR_8r.png: 0x0 pixel, 300dpi, 0.00x0.00 cm, bb=
 \caption{T2 test image, radial sampling mask ($\mathcal{M}_1$) $8$ lines ($S_r=3.98$): (a) FNCR reconstruction, (b) IL$_1-\ell_q$ reconstruction.}
 \label{fig:T2_8r}
\end{figure}

\begin{figure}
 \centering
 \includegraphics[width=6cm,height=6cm]{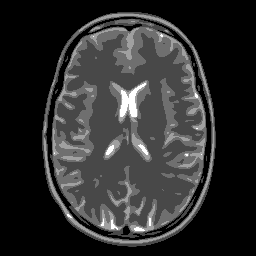}
 \caption{MRI $256 \times 256$ brain image.}
 \label{fig:brain}
\end{figure}
Concerning the real MRI data we compare  IL$_1-\ell_q$ and FNCR algorithms in the  reconstructions of  the $256 \times 256$
brain image (T3 test), represented  in figure \ref{fig:brain}. We report in table \ref{tab:tabComp_B} the results 
obtained by reconstructing the noiseless data undersampled by $\mathcal{M}_1$, $\mathcal{M}_2$, $\mathcal{M}_3$ masks.

From the table, we see that FNCR always outperforms  IL$_1-\ell_q$. 

In figure \ref{fig:B3_8p} we show the FCNR and  IL$_1-\ell_q$ reconstructions 
in case 
of mask $\mathcal{M}_3$ and  $Sr=8\%$. 

Finally in figure \ref{fig:Row_err}  
we plot the reconstructed rows corresponding to the largest error (200-th row) for the true image and
the FNCR reconstruction (Figure \ref{fig:Row_err}(a)) and for the true image and
 IL$_1-\ell_q$ reconstruction (Figure \ref{fig:Row_err}(b)). 
 We observe that FNCR better fits the corresponding row of the original  image. 
\\

\begin{table}[h]
\centering
\begin{tabular}{c c c c c c}
\hline
\multicolumn{1}{l}{} &\multicolumn{1}{l}{} & \multicolumn{2}{c}{IL$_1-\ell_q$} & \multicolumn{2}{c}{FNCR} \\
& $S_r$ & PSNR & Time & PSNR & Time \\
\hline 
$ \mathcal{M}_1$& $10.60\% $ & 100.29 & 13.46s  &  100.3 & 13.88  \\
&  $9.02\%$ & 27.68 & 28.70 & 32.24 & 27.67s   \\
\hline            

\hline 
$ \mathcal{M}_2$& $25\%$ & 50.41 & 6.15s  & 60.54 & 47.98s \\    
&  $12.5\%$  & 33.87 & 19.32s & 34.74 & 87.84s\\    
\hline      

$ \mathcal{M}_3$ & $10\%$ &100.01 & 4.22s & 100.1 & 6.26s \\   
& $8\%$  & 30.22 & 28.45s &100.2 & 37.34s\\
& $5\%$ & 25.46 & 28.06 & 27.46 & 6.77s \\
\hline                  
\end{tabular}
\caption{Comparison between FNCR and  IL$_1-\ell_q$ reconstructions on T3 test using $\mathcal{M}_1$, $\mathcal{M}_2$, $\mathcal{M}_3$ masks.}
\label{tab:tabComp_B}
\end{table}

\begin{figure}
\begin{center}
 \includegraphics[width=6cm,height=6cm]{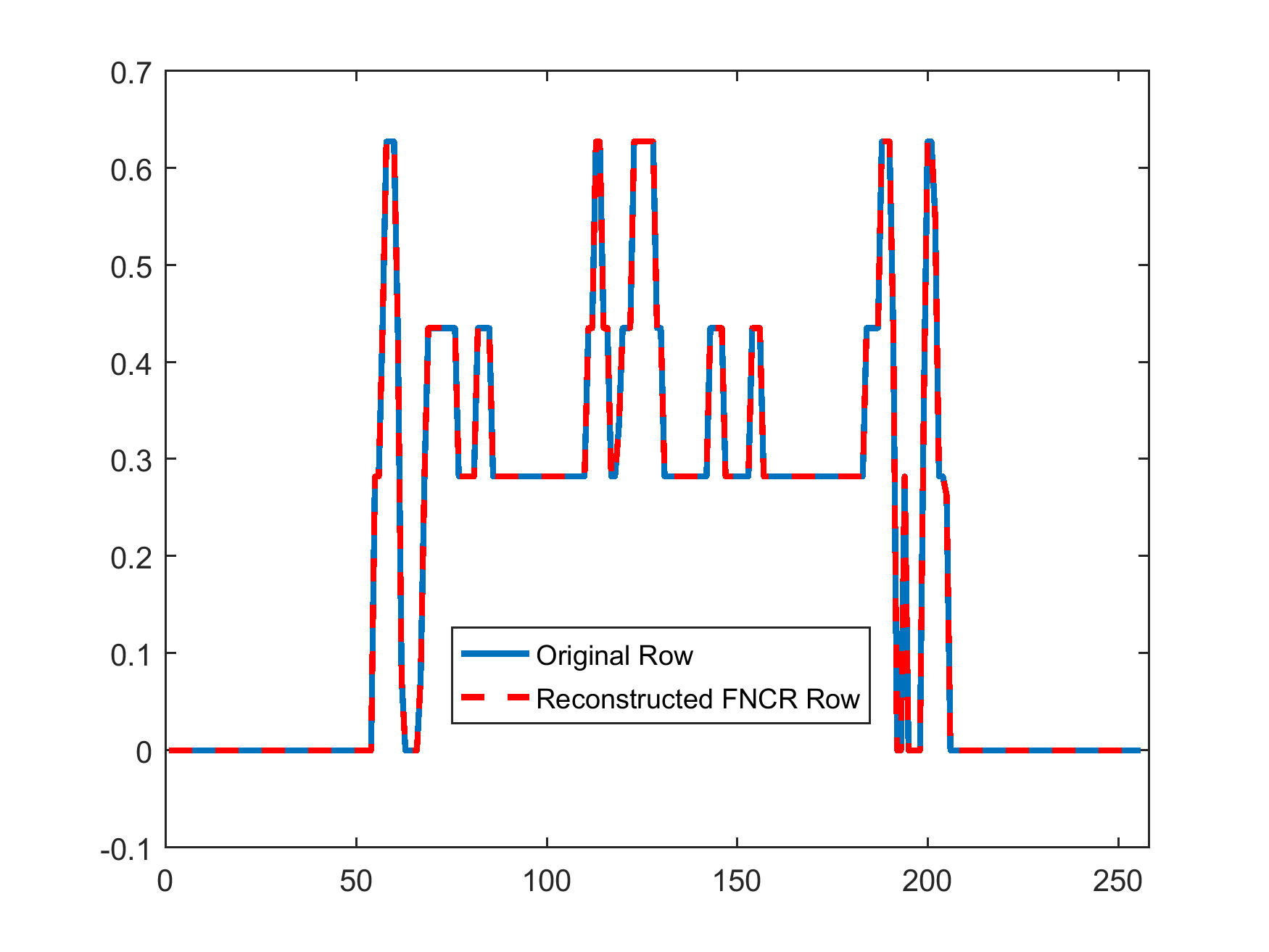}
 \includegraphics[width=6cm,height=6cm]{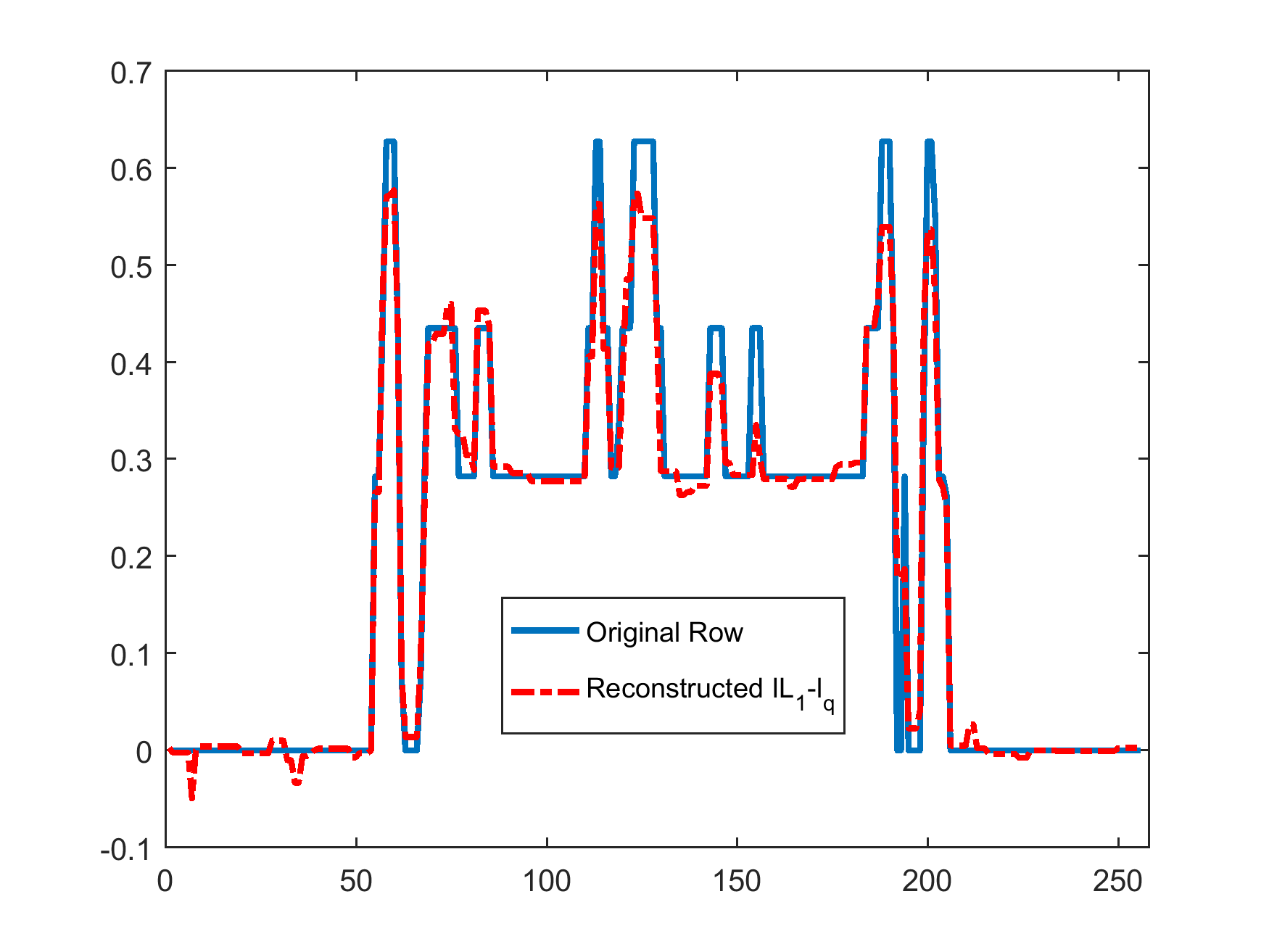}\\
(a) \hspace{5cm}    (b)\\

\end{center}
\caption{Row 200 of T3 image. Reconstructions obtained with $\mathcal{M}_2$ $Sr=8\%$and Mask undersampling $\mathcal{M}_3$, $Sr=8\%$. 
(a) original and  FNCR reconstruction, (b) original and  IL$_1-\ell_q$ reconstruction.}
\label{fig:Row_err}
\end{figure}

\begin{figure}
 \centering
 \includegraphics[width=6cm,height=6cm]{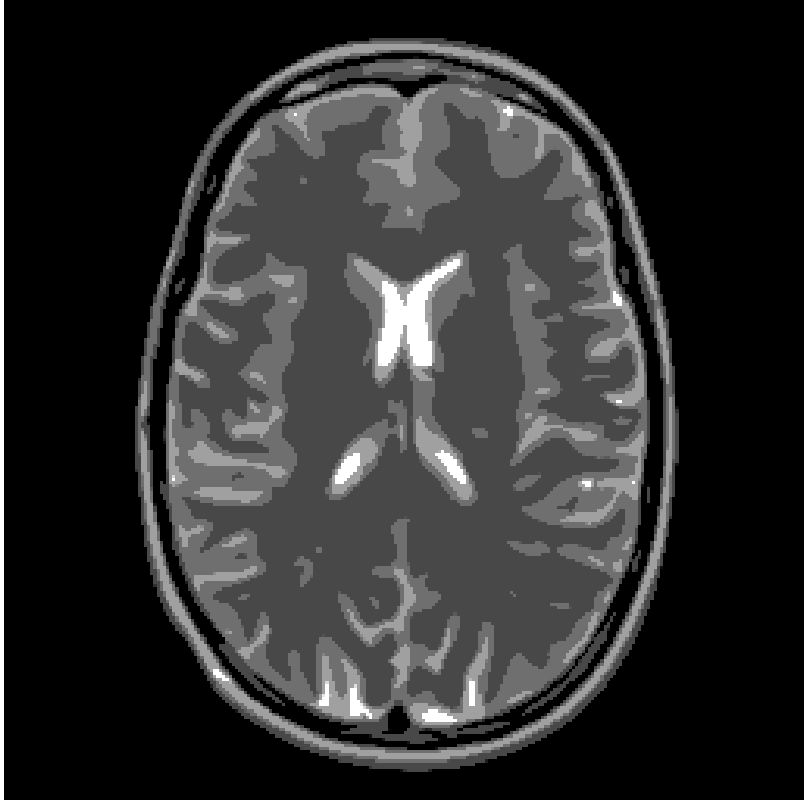} 
 \includegraphics[width=6cm,height=6cm]{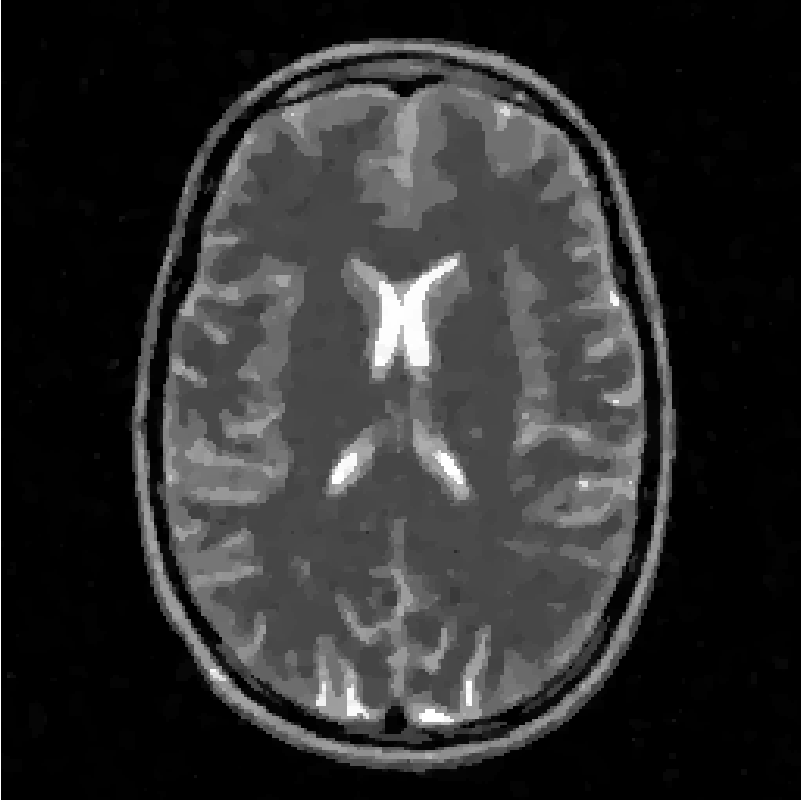} \\
 (a) \hspace{5cm}    (b)\\
 % fig_T2_FNCR_8r.png: 0x0 pixel, 300dpi, 0.00x0.00 cm, bb=
 \caption{T3 image reconstructions with $\mathcal{M}_3$ and $S_r=8\%$: (a) FNCR reconstruction, (b) IL$_1-\ell_q$ reconstruction.}
 \label{fig:B3_8p}
\end{figure}
We can finally conclude  that FNCR is more competitive than IL$_1-\ell_q$ in reconstructing difficult test images from very severely undersampled data.
%------------------------------------------------------------------------------------------- 
% \input{conclusions}
\section{Conclusions \label{concl}}
We presented
 a new algorithm,  called Fast Non Convex Reweighting,
 to solve a non-convex minimization problem by means of  iterative convex approximations.
 The resulting convex  problem is solved by an iterative Forward-Backward (FB) algorithm, 
 and a Weighted Split-Bregman iteration is applied in the Backward step.   
 A new splitting scheme is proposed to improve the efficiency of the Backward steps. 

 A detailed analysis of the algorithm properties and  convergence is carried out.
 The extensive experiments on image reconstruction from undersampled  MRI data show that 
 FNCR gives very good results in terms both of quality and computational complexity and it   
 is more competitive in reconstructing  test problems with real images from very severely under-sampled data.
%-------------------------------------------------------------------------------------------
 \appendix
\section{Convergence \label{A}}
Aim of this subsection is to prove that the iterates $\bar{u}^{(h)}$, computed by Algorithm \ref{alg1},
converge to a local minimum of problem   \eqref{eq:nconv}.
\\
To this purpose we consider both the nonconvex functional 
\begin{equation}
{\cal P}(\lambda_h,F_{\mu_{\ell}} (Du),u)=  \left \{ \lambda_h F_{\mu_{\ell}}(Du)  +\frac{1}{2}||\Phi u- z||_2^2 \right \}
\label{nconv}
\end{equation}
and its convex majorizer
\begin{equation}
{\cal P}(\lambda_h,{\cal F}_{h,\mu_{\ell}}(Du),u)=  \left \{ \lambda_h {\cal F}_{h,\mu_{\ell}}(Du)  +\frac{1}{2}||\Phi u- z||_2^2 \right \}.
\label{convMaj}
\end{equation}
\par\noindent
%From relation \eqref{eq:fbig} we have that:
%$$\arg \min_{u} {\cal P}(\lambda_\ell,F_{\ell,\mu}(Du),u) \equiv \arg \min_{u} {\cal P}(\lambda,f_{\ell,\mu}(Du),u)$$
The approximate solution, computed by  Algorithm \ref{alg1} in \eqref{eq:alg1},
can be written as:
\begin{equation}
\bar{u}^{(h)}= \arg \min_{u} {\cal P}(\lambda_{h-1},{\cal F}_{h-1,\mu_\ell}(Du),u).
\label{umin}
\end{equation}  
Since ${\cal F}_{h-1, \mu_\ell}(Du)$ is a local convex approximation of the nonconvex function $F_{\mu_\ell}(Du)$ 
at each step $h$ of the iterative reweighting method, then  from \eqref{eq:fbig} we have:
\begin{equation}
 {\cal F}_{h-1,\mu_\ell}(D\bar{u}^{(h-1)})=F_{\mu_\ell}(D\bar{u}^{(h-1)})  \quad \mbox{ and} \quad \quad  
 {\cal F}_{h-1, \mu_\ell}(Du) \ge F_{\mu_\ell}(Du) \ \ \ u \neq \bar{u}^{(h-1)}.
\label{Maj}
\end{equation}  
The following result proves the descent property of the iterative reweighting method. 
\begin{prop}
%Let the functionals ({\ref{nconv}}) and ({\ref{convMaj}}), with $F_\mu(u)$ non convex 
%and $F_{\ell,\mu}(u)$ satisfying ({\ref{Maj}}), be given.
Let $\lambda_{h-1}$ and $\lambda_h$ be two values of the penalization parameter such that $\lambda_{h-1} > \lambda_h$
and let $\bar{u}^{(h)}$ and $\bar{u}^{(h+1)}$ be the corresponding minimizers given by  ({\ref{umin}}), 
then the nonconvex functional ${\cal P}(\lambda,F_{\mu_\ell} (Du),u)$ satisfies the following inequality:
$${\cal P}(\lambda_{h-1},F_{\mu_\ell} (D\bar{u}^{(h)}),\bar{u}^{(h)})>
{\cal P}(\lambda_{h},F_{\mu_\ell} (D\bar{u}^{(h+1)}),\bar{u}^{(h+1)})$$
\label{propos4-1}
\end{prop}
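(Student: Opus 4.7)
The plan is to chain four inequalities that exploit (i) the majorization identity at the expansion point, (ii) the definition of $\bar u^{(h+1)}$ as the exact minimizer of the convex surrogate, (iii) the pointwise tangency/majorization of $\mathcal{F}_{h,\mu_\ell}$ over $F_{\mu_\ell}$, and (iv) the monotonicity of $\mathcal{P}(\lambda, F_{\mu_\ell}(Du),u)$ in $\lambda$ owing to $F_{\mu_\ell}\ge 0$. I would not need any new analytic machinery beyond \eqref{Maj} and \eqref{umin}.

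First I would use that $\bar u^{(h+1)}=\arg\min_u \mathcal{P}(\lambda_h,\mathcal{F}_{h,\mu_\ell}(Du),u)$, which immediately yields
\[
\mathcal{P}(\lambda_h,\mathcal{F}_{h,\mu_\ell}(D\bar u^{(h+1)}),\bar u^{(h+1)}) \;\le\; \mathcal{P}(\lambda_h,\mathcal{F}_{h,\mu_\ell}(D\bar u^{(h)}),\bar u^{(h)}).
\]
Next, by the left identity in \eqref{Maj} the surrogate is tight at the current iterate, so the right-hand side equals $\mathcal{P}(\lambda_h,F_{\mu_\ell}(D\bar u^{(h)}),\bar u^{(h)})$. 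For the left-hand side I would apply the right inequality in \eqref{Maj} at $u=\bar u^{(h+1)}$, which is majorized by $\mathcal{F}_{h,\mu_\ell}$, obtaining
\[
\mathcal{P}(\lambda_h,F_{\mu_\ell}(D\bar u^{(h+1)}),\bar u^{(h+1)}) \;\le\; \mathcal{P}(\lambda_h,\mathcal{F}_{h,\mu_\ell}(D\bar u^{(h+1)}),\bar u^{(h+1)}).
\]
Chaining these three relations produces the monotonicity of the \emph{nonconvex} functional under a fixed parameter:
\[
\mathcal{P}(\lambda_h,F_{\mu_\ell}(D\bar u^{(h+1)}),\bar u^{(h+1)}) \;\le\; \mathcal{P}(\lambda_h,F_{\mu_\ell}(D\bar u^{(h)}),\bar u^{(h)}).
\]

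To close the argument I would bridge from $\lambda_h$ to $\lambda_{h-1}$ at the common point $\bar u^{(h)}$. Since $F_{\mu_\ell}(Du)\ge 0$ for every $u$ (because $\psi_{\mu}\ge 0$ by \eqref{Function_PSI}) and $\lambda_{h-1}>\lambda_h$,
\[
\mathcal{P}(\lambda_{h-1},F_{\mu_\ell}(D\bar u^{(h)}),\bar u^{(h)}) - \mathcal{P}(\lambda_h,F_{\mu_\ell}(D\bar u^{(h)}),\bar u^{(h)}) = (\lambda_{h-1}-\lambda_h)\,F_{\mu_\ell}(D\bar u^{(h)}) \;\ge\; 0,
\]
and concatenating with the previous chain delivers the required inequality. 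Strictness is obtained because $\psi_\mu(t)>0$ for every $t\ne 0$, so that $F_{\mu_\ell}(D\bar u^{(h)})>0$ as long as $\bar u^{(h)}$ is not a constant image, which is clearly excluded by the data-fit term $\frac12\|\Phi u-z\|_2^2$ in any nontrivial reconstruction.

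The main obstacle I anticipate is precisely certifying the strict sign of $(\lambda_{h-1}-\lambda_h)F_{\mu_\ell}(D\bar u^{(h)})$: one must either adopt the mild nondegeneracy assumption $D\bar u^{(h)}\not\equiv 0$ (natural for MR reconstructions where $\Phi^T z$ already carries edge content), or alternatively argue strictness through the uniqueness of the convex surrogate's minimizer combined with $\bar u^{(h+1)}\neq \bar u^{(h)}$ (which would make the inequality in the very first step strict). Beyond this sign issue, the remaining steps are just bookkeeping around \eqref{Maj} and \eqref{umin}.
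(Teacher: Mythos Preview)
Your proof is correct and follows essentially the same route as the paper: both chain the tangency identity $\mathcal{F}_{h,\mu_\ell}(D\bar u^{(h)})=F_{\mu_\ell}(D\bar u^{(h)})$, the minimizer property of $\bar u^{(h+1)}$, the majorization $\mathcal{F}_{h,\mu_\ell}\ge F_{\mu_\ell}$, and the strict drop from $\lambda_{h-1}$ to $\lambda_h$, merely in a slightly different order. Your explicit discussion of why the $\lambda$-step is strict (via $F_{\mu_\ell}(D\bar u^{(h)})>0$) is in fact more careful than the paper, which asserts the strict inequality without comment.
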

{\bf Proof:} 
Using relation \eqref{nconv}  and the first part of property \eqref{Maj}, rewritten for $h$ we have:
\begin{align*}
 {\cal P}(\lambda_{h-1},F_{\mu_\ell}(D\bar{u}^{(h)}),\bar{u}^{(h)}) & = \frac{1}{2} ||\Phi \bar{u}^{(h)}-z||_2^2+\lambda_{h-1} 
F_{\mu_\ell}(D\bar{u}^{(h)}) \\
 & = \frac{1}{2} ||\Phi \bar{u}^{(h)}-z||_2^2 +\lambda_{h-1} {\cal F}_{h,{\mu_\ell}}(D\bar{u}^{(h)})
\end{align*}
From the assumption  $\lambda_{h-1} > \lambda_h$ it follows that:
$$\frac{1}{2} ||\Phi \bar{u}^{(h)}-z||_2^2 +\lambda_{h-1} {\cal F}_{h,\mu_\ell}(D\bar{u}^{(h)}) > 
\frac{1}{2} ||\Phi \bar{u}^{(h)}-z||_2^2 +\lambda_{h} {\cal F}_{h,\mu_\ell}(D\bar{u}^{(h)})$$
Using \eqref{umin}:
$$\frac{1}{2} ||\Phi \bar{u}^{(h)}-z||_2^2 +\lambda_{h} {\cal F}_{h,\mu_\ell}(D\bar{u}^{(h)}) \ge
\frac{1}{2} ||\Phi \bar{u}^{(h+1)}-z||_2^2 +\lambda_{h} {\cal F}_{h,\mu_\ell}(D\bar{u}^{(h+1)}) $$
Applying the second part of  property \eqref{Maj} to the case $Du\equiv D\bar{u}^{(h+1)}$:
$$ \frac{1}{2} ||\Phi \bar{u}^{(h+1)}-z||_2^2 +\lambda_{h} {\cal F}_{h,\mu_\ell }(D\bar{u}^{(h+1)}) \ge
\frac{1}{2} ||\Phi \bar{u}^{(h+1)}-z||_2^2 +\lambda_{\ell} F_{\mu_\ell}(D\bar{u}^{(h+1)})$$
From definition \eqref{nconv}: 
$$ \frac{1}{2} ||\Phi \bar{u}^{(h+1)}-z||_2^2 +\lambda_{h} F_{\mu_{\ell}}(D\bar{u}^{(h+1)})={\cal P}(\lambda_{h},F_{\mu_\ell}(D\bar{u}^{(h+1)}),\bar{u}^{(h+1)})$$
hence:
$${\cal P}(\lambda_{h-1},F_{\mu_\ell}(D\bar{u}^{(h)}),\bar{u}^{(h)}) > {\cal P}(\lambda_{h},F_{\mu_\ell}(D\bar{u}^{(h+1)}),\bar{u}^{(h+1)})$$
This proves the result.
\qed

In order to prove the convergence we need the following results.
\begin{prop}
Let's define  the bounding set $B$ s.t. $\bar{u}^{(h)} \in B$, $\forall h \ge 0$, 
let's assume that $F_{\mu_\ell}$ and ${\cal F}_{h,\mu_\ell}$ have a locally Lipschitz continuous gradient on $B$
 with a common Lipschitz constant $\tilde{L}\geq 0$, 
 let's assume also that ${\cal P}(\lambda_h,{\cal F}_{h,\mu_\ell}(Du),u)$ is strongly convex with convexity parameter $\nu>0$, 
 then the following properties hold:
\begin{enumerate}
\item  Descent property:
\begin{equation}
\label{dscp}
%{\cal P}(\lambda_{\ell-1},F_\mu (Du_{\lambda_{\ell-1}}),u_{\lambda_{\ell-1}})>{\cal P}(\lambda_{\ell},F_\mu (Du_{\lambda_{\ell}}),u_{\lambda_{\ell}})
{\cal P}(\lambda_{h-1},F_{\mu_{\ell}} (D\bar{u}^{(h)}),\bar{u}^{(h)})>{\cal P}(\lambda_{h},F_{\mu_{\ell}}D\bar{u}^{(h+1)}),\bar{u}^{(h+1)})
\end{equation}
\item  There exists $C>0$ such that for all $\ell \in N$ there exists 
$$\xi_{h+1} \in \partial  {\cal P}(\lambda_{h},F_{\mu_\ell} (D\bar{u}^{(h+1)}),\bar{u}^{(h+1)})$$ fulfilling 
\begin{equation}
\label{zerop}
\| \xi_{h+1}\|_2 \leq C \| \bar{u}^{(h+1)}-\bar{u}^{(h)}\|_2
\end{equation}
\item For any converging subsequence $\left \{\bar{u}^{(h_j)}\right \}_{j\in N}$ with 
$$\bar{\bar u}\equiv \lim_{j \rightarrow \infty}\bar{u}^{(h_j)}, $$ and $\left\{\lambda_{h_j}\right\}$ with
$\bar{\bar \lambda}=\lim_{j \rightarrow \infty}\lambda_{h_j}$, 
we have 
\begin{equation}
\label{cvp}
{\cal P}(\lambda_{h_j},F_{\mu_{\ell}} (D\bar{u}^{(h_j)}),\bar{u}^{(h_j)}) \rightarrow {\cal P}(\bar{\bar \lambda},F_{\mu_\ell} 
(D \bar{\bar u}), \bar{\bar u})
 \quad j \rightarrow \infty.
\end{equation}
\end{enumerate}

\end{prop}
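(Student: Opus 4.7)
The plan is to handle the three claims separately. Property (1) is already contained in Proposition \ref{propos4-1}: its proof uses only the majorization identity \eqref{Maj} and the defining minimization property \eqref{umin} of $\bar u^{(h+1)}$, so the strict descent inequality holds without invoking the new Lipschitz or strong-convexity hypotheses. I would cite it directly.

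For the subgradient bound (2), I would begin from the first-order optimality of $\bar u^{(h+1)}$ as minimizer of the convex majorizer $u \mapsto \mathcal{P}(\lambda_h, \mathcal{F}_{h,\mu_\ell}(Du), u)$: this yields some $g \in \partial_u \mathcal{F}_{h,\mu_\ell}(D\bar u^{(h+1)})$ with $\Phi^T(\Phi \bar u^{(h+1)} - z) + \lambda_h g = 0$. I would then produce a compatible subgradient $g' \in \partial_u F_{\mu_\ell}(D\bar u^{(h+1)})$ by keeping the same componentwise sign selection but replacing the weights $\psi'_{\mu_\ell}(|\bar u_q^{(h)}|)$ by $\psi'_{\mu_\ell}(|\bar u_q^{(h+1)}|)$ ($q = x, y$), so that
\begin{equation*}
\xi_{h+1} := \Phi^T(\Phi \bar u^{(h+1)} - z) + \lambda_h g' = \lambda_h (g' - g)
\end{equation*}
lies in $\partial_u \mathcal{P}(\lambda_h, F_{\mu_\ell}(D\bar u^{(h+1)}), \bar u^{(h+1)})$. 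The key observation is that, by the very construction of the linearization \eqref{eq:fbig}--\eqref{fell}, the analogous subgradients of $F_{\mu_\ell}$ and $\mathcal{F}_{h,\mu_\ell}$ coincide at the linearization point $\bar u^{(h)}$. Adding and subtracting this common value yields
\begin{equation*}
g' - g = \bigl[\nabla F_{\mu_\ell}(D\bar u^{(h+1)}) - \nabla F_{\mu_\ell}(D\bar u^{(h)})\bigr] - \bigl[\nabla \mathcal{F}_{h,\mu_\ell}(D\bar u^{(h+1)}) - \nabla \mathcal{F}_{h,\mu_\ell}(D\bar u^{(h)})\bigr],
\end{equation*}
and the Lipschitz hypothesis with common constant $\tilde L$ bounds each bracket by $\tilde L \,\|\bar u^{(h+1)} - \bar u^{(h)}\|_2$. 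Combined with the monotonicity bound $\lambda_h \leq \lambda_0$ coming from the update rule \eqref{eq:lambda}, this gives $\|\xi_{h+1}\|_2 \leq 2 \tilde L \lambda_0 \,\|\bar u^{(h+1)} - \bar u^{(h)}\|_2$, so $C := 2 \tilde L \lambda_0$ does the job.

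For property (3), I would invoke joint continuity of $(\lambda, u) \mapsto \tfrac{1}{2}\|\Phi u - z\|_2^2 + \lambda F_{\mu_\ell}(Du)$: the data-fit is continuous in $u$, $F_{\mu_\ell}\circ D$ is continuous because $\psi_{\mu_\ell}$ is continuous and $D$ is linear, and multiplication by $\lambda$ is continuous in both arguments. Passing to the limit along $\bar u^{(h_j)} \to \bar{\bar u}$ and $\lambda_{h_j} \to \bar{\bar \lambda}$ then gives \eqref{cvp}.

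The main obstacle is part (2), because the composition $\psi_{\mu_\ell}(|\cdot|)$ has a kink at zero, so both $F_{\mu_\ell}$ and $\mathcal{F}_{h,\mu_\ell}$ are only subdifferentiable wherever a component of $Du$ vanishes. One must therefore verify that the Clarke subdifferential selections used for $g$ and $g'$ can be chosen in a matched way, so that the Lipschitz estimate really transfers to their difference. Under the stated hypothesis that both functions admit a locally Lipschitz gradient on $B$ with common constant $\tilde L$, the componentwise matching described above is clean; had kinks inside $B$ to be accommodated, one would instead split $F_{\mu_\ell}$ into a smooth part and an absolute-value part and estimate each separately.
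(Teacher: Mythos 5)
Your proposal is correct and takes essentially the same route as the paper: part (1) is handled exactly as you do, by invoking Proposition \ref{propos4-1}, and for parts (2) and (3) the paper simply defers to Proposition 5 of \cite{Pock2015}, whose argument --- first-order optimality of the minimizer of the convex majorizer, a matched subgradient selection exploiting that the (sub)gradients of $F_{\mu_\ell}$ and ${\cal F}_{h,\mu_\ell}$ coincide at the linearization point $\bar u^{(h)}$, the common Lipschitz constant $\tilde L$ together with $\lambda_h\leq\lambda_0$, and joint continuity of $(\lambda,u)\mapsto {\cal P}(\lambda,F_{\mu_\ell}(Du),u)$ for the limit --- is precisely what you spell out. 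Your explicit treatment of the kink of $\psi_{\mu_\ell}(|\cdot|)$ at zero via matched Clarke-subdifferential selections is a detail the paper leaves implicit in that citation.
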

\par\noindent
{\bf Proof:}
Using proposition \ref{propos4-1} we can prove the descent property \eqref{dscp}. 
Relations \eqref{zerop} and \eqref{cvp} can be easily proved as in \cite{Pock2015}, proposition 5.
\qed
\begin{prop}
 The sequence $\bar{u}^{(h)}$ generated by Algorithm \eqref{alg1}  converges to a local minimum of problem  \eqref{eq:nconv} for each $\ell$.
\end{prop}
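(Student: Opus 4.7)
The plan is to combine the three structural properties just established (descent, subgradient bound, sequential continuity) into a standard majorization-minimization convergence argument, organized in three stages.

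First, I would exploit the strong convexity (parameter $\nu$) of the majorizer to get summable squared successive differences. Since $\bar{u}^{(h)}$ is the global minimizer of $\mathcal{P}(\lambda_{h-1},\mathcal{F}_{h-1,\mu_\ell}(D\cdot),\cdot)$, strong convexity gives
\[
\mathcal{P}(\lambda_{h-1},\mathcal{F}_{h-1,\mu_\ell}(D\bar{u}^{(h-1)}),\bar{u}^{(h-1)})-\mathcal{P}(\lambda_{h-1},\mathcal{F}_{h-1,\mu_\ell}(D\bar{u}^{(h)}),\bar{u}^{(h)})\ge\tfrac{\nu}{2}\|\bar{u}^{(h)}-\bar{u}^{(h-1)}\|_2^2.
\]
The tangency identities \eqref{Maj} at both endpoints transfer this inequality to the nonconvex functional $\mathcal{P}(\lambda_{h-1},F_{\mu_\ell}(D\cdot),\cdot)$; combining with the $\lambda$-monotonicity exploited in Proposition~\ref{propos4-1} and telescoping against the lower bound $\mathcal{P}\ge 0$ yields $\sum_h\|\bar{u}^{(h)}-\bar{u}^{(h-1)}\|_2^2<\infty$, and in particular $\|\bar{u}^{(h+1)}-\bar{u}^{(h)}\|_2\to 0$.

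Second, since $\{\bar{u}^{(h)}\}\subset B$ is bounded and $\{\lambda_h\}$ is monotone decreasing and bounded below, by Bolzano--Weierstrass I extract a subsequence $\bar{u}^{(h_j)}\to\bar{\bar{u}}$ with $\lambda_{h_j}\to\bar{\bar{\lambda}}$. Property \eqref{zerop} supplies $\xi_{h_j+1}\in\partial\mathcal{P}(\lambda_{h_j},F_{\mu_\ell}(D\bar{u}^{(h_j+1)}),\bar{u}^{(h_j+1)})$ with $\|\xi_{h_j+1}\|_2\le C\|\bar{u}^{(h_j+1)}-\bar{u}^{(h_j)}\|_2\to 0$. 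Invoking the sequential continuity \eqref{cvp} together with the outer semicontinuity of the limiting subdifferential of $\mathcal{P}$ (obtained by splitting the smooth quadratic data term from the nonsmooth piece $\lambda F_{\mu_\ell}\circ D$), I pass to the limit and conclude $0\in\partial\mathcal{P}(\bar{\bar{\lambda}},F_{\mu_\ell}(D\bar{\bar{u}}),\bar{\bar{u}})$, so $\bar{\bar{u}}$ is a critical point of \eqref{eq:nconv}.

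Third, I would upgrade stationarity to \emph{local} minimality by leveraging the tangent-majorizer structure at the limit. Forming the majorizer $\mathcal{F}_{\infty,\mu_\ell}$ at $\bar{\bar{u}}$, the functional $\mathcal{P}(\bar{\bar{\lambda}},\mathcal{F}_{\infty,\mu_\ell}(D\cdot),\cdot)$ is strongly convex with parameter $\nu$ and, by the stationarity just obtained, has $\bar{\bar{u}}$ as its unique minimizer. A second-order expansion of $\mathcal{F}_{\infty,\mu_\ell}-F_{\mu_\ell}$ using the common Lipschitz-gradient constant $\tilde{L}$ bounds the majorization gap by a multiple of $\|u-\bar{\bar{u}}\|_2^2$; together with the $\tfrac{\nu}{2}\|u-\bar{\bar{u}}\|_2^2$ growth of the strongly convex majorizer above its minimum, this produces a neighborhood on which $\mathcal{P}(\bar{\bar{\lambda}},F_{\mu_\ell}(D\cdot),\cdot)$ has a strict local minimum at $\bar{\bar{u}}$. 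The main obstacle is precisely this third step: MM schemes generically only deliver subsequential convergence to critical points, and promoting this to \emph{local} minimality genuinely uses the quantitative balance between $\tilde{L}$ and $\nu$ from the preceding proposition, together with the tangency and pointwise-domination structure inherited from \eqref{Maj} in a neighborhood of $\bar{\bar{u}}$.
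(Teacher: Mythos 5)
There is a genuine gap, and it sits exactly where your plan stops: your argument never uses the Kurdyka--\L{}ojasiewicz property, which is the ingredient the paper relies on. Steps one and two of your proposal (strong convexity $\Rightarrow$ summable squared steps $\Rightarrow$ $\|\bar u^{(h+1)}-\bar u^{(h)}\|_2\to 0$, then Bolzano--Weierstrass plus \eqref{zerop}, \eqref{cvp} and outer semicontinuity of the limiting subdifferential) only deliver \emph{subsequential} convergence to critical points. The proposition asserts convergence of the sequence $\bar u^{(h)}$ itself, and vanishing successive differences alone do not imply this: the iterates could drift along a continuum of critical points. The paper closes this hole by observing that the objective in \eqref{eq:nconv} is analytic (since $\psi$ in \eqref{fell} is evaluated on nonnegative arguments) and therefore satisfies the KL property \cite{Attouch13}, and then invoking the convergence theorem for iteratively reweighted algorithms of \cite{Pock2015}, which converts the three properties \eqref{dscp}, \eqref{zerop}, \eqref{cvp} plus KL into full-sequence convergence. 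Without that (or an equivalent finite-length argument), your proof establishes a weaker statement than the one claimed.

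Your third step is also not sound as written. To turn stationarity of the limit $\bar{\bar u}$ into local minimality you compare the $\tfrac{\nu}{2}\|u-\bar{\bar u}\|_2^2$ growth of the tangent majorizer with the majorization gap $\bar{\bar\lambda}\bigl(\mathcal{F}_{\infty,\mu_\ell}(Du)-F_{\mu_\ell}(Du)\bigr)$, which by the Lipschitz-gradient bound is only controlled by $\tfrac{\bar{\bar\lambda}\tilde L}{2}\|D(u-\bar{\bar u})\|_2^2$. The conclusion therefore requires a quantitative condition of the type $\nu>\bar{\bar\lambda}\tilde L\|D\|^2$, which is nowhere among the standing assumptions ($\tilde L\ge 0$ and $\nu>0$ are unrelated), so the inequality does not close. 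The paper does not attempt such an upgrade at all: it imports the conclusion directly from \cite{Pock2015} (whose theorem, strictly speaking, yields convergence to a critical point, the "local minimum" wording being the paper's gloss). If you want a self-contained proof you would need either the KL-based finite-length argument or an explicit hypothesis linking $\nu$, $\tilde L$ and $\lambda_h$; as it stands, both the whole-sequence convergence and the local-minimality claims are unproved in your proposal.
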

{\bf Proof:}
We observe that the objective function  %\eqref{unconstr} 
$$\left \{ \lambda_h F_{\mu_\ell}(Du) + \frac{1}{2} \| \Phi u - z \|_2^2 \right \},\ \  \lambda >0.$$
satisfies the Kurdika-Lojasiewicz property
\cite{Attouch13}.
In fact it is an analytic function, because the function $\psi$ in \eqref{fell} is evaluated in nonnegative arguments, therefore
we can restrict the domain of $\psi$ on $R^+ \cup \{0 \}$.
\\
Finally the convergence  immediately follows from \cite{Pock2015}, where the authors prove that the 
Iterative Reweighted Algorithm converges, 
provided that the relations \eqref{dscp}, \eqref{zerop}, \eqref{cvp} hold
and the objective function satisfies the Kurdika-Lojasiewicz
property.
\qed
\section{Proof of theorem \ref{th:t1} \label{A_B}}
In order to prove theorem \ref{th:t1} we first prove the following lemma. 
\begin{lemm}
\label{lm:l1}
Let $M=E+F$ where $E$ is the Identity matrix and $F=\beta \theta \Delta^w $ with 
$\theta,\beta >0$. If $0< {\theta}<\frac {1}{ \beta \|\Delta^w\|_\infty}$
  then $M$ is a symmetric positive definite matrix. 
\end{lemm}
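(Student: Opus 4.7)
The plan is to establish symmetry first, then positive definiteness via a spectral bound. Symmetry of $M$ is immediate: by the definition $\Delta^w = -\bigl((\nabla_x^w)^T \nabla_x^w + (\nabla_y^w)^T \nabla_y^w\bigr)$, the matrix $\Delta^w$ is a (negative) sum of Gram-type matrices, hence symmetric, so $F = \beta\theta \Delta^w$ is symmetric and $M = I + F$ inherits symmetry.

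For positive definiteness I would argue through eigenvalues. The same Gram-type decomposition shows that for any $x$,
\begin{equation*}
x^T \Delta^w x \;=\; -\|\nabla_x^w x\|_2^2 - \|\nabla_y^w x\|_2^2 \;\le\; 0,
\end{equation*}
so $\Delta^w$ is negative semidefinite and its eigenvalues $\mu_i$ are all non-positive. To control the smallest (most negative) one, I would invoke the standard inequality $\rho(\Delta^w) \le \|\Delta^w\|_\infty$, valid because $\|\cdot\|_\infty$ is a submultiplicative matrix norm. Combined with symmetry this gives $|\mu_i| \le \|\Delta^w\|_\infty$, hence $-\|\Delta^w\|_\infty \le \mu_i \le 0$ for every $i$.

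The eigenvalues of $M$ are then $1 + \beta\theta \mu_i$, which lie in the interval $[\,1 - \beta\theta \|\Delta^w\|_\infty,\; 1\,]$. The hypothesis $\theta < 1/(\beta\|\Delta^w\|_\infty)$ makes the left endpoint strictly positive, so all eigenvalues of $M$ are positive, and together with symmetry this yields the conclusion.

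The only delicate point — and the one I expect to be the main obstacle — is matching the hypothesis, which is phrased in terms of the $\infty$-norm of $\Delta^w$, to a spectral statement about $M$. This is overcome by the elementary bound $\rho(\Delta^w) \le \|\Delta^w\|_\infty$; once this is invoked, the remainder is routine linear algebra.
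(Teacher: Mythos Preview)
Your argument is correct, but it proceeds differently from the paper's own proof. The paper writes out the five-point stencil of $\Delta^w$ explicitly, reads off the entries of $M=I+\beta\theta\Delta^w$, and then verifies that the hypothesis $\beta\theta<1/\|\Delta^w\|_\infty$ forces strict diagonal dominance (with positive diagonal), from which positive definiteness follows. In contrast, you bypass the entrywise computation entirely: you exploit the Gram structure $\Delta^w=-\bigl((\nabla_x^w)^T\nabla_x^w+(\nabla_y^w)^T\nabla_y^w\bigr)$ to conclude directly that $\Delta^w$ is negative semidefinite, and then combine this with the norm bound $\rho(\Delta^w)\le\|\Delta^w\|_\infty$ to locate the eigenvalues of $M$ in $(0,1]$.

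Your route is shorter and more structural --- it would apply unchanged to any symmetric negative semidefinite $\Delta^w$, not only the weighted discrete Laplacian. The paper's route, on the other hand, yields as a by-product the explicit formula $\|\Delta^w\|_\infty=\max_k 2(\alpha_k^2+\alpha_{k+1}^2+\eta_k^2+\eta_{k+N}^2)$ in terms of the weights, which is what the algorithm actually uses to set $\theta$; your proof treats $\|\Delta^w\|_\infty$ as a black box. Both arguments are sound, and the ``delicate point'' you flag is handled correctly by the standard inequality $\rho(\cdot)\le\|\cdot\|_\infty$.
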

\begin{proof}
By definition of $\Delta^w$ in \eqref{eq:lapl},  it easily follows that $M=(I + \beta \theta \Delta^w)$
is a  real symmetric matrix. Moreover  in the finite discrete setting the $k$-th component of the product $\Delta^\omega u$ is:
\begin{multline}
\label{ElLapl}
\left (\Delta^\omega u \right )_{k}=-(\alpha_{k}^2+\alpha_{k+1}^2+\eta_{k}^2+\eta_{k+N}^2)u_{k}+\alpha_{k+1}^2u_{k+1}+ \\
\alpha_{k}^2 u_{k-1}+\eta_{k+N}^2u_{k+N}+\eta_{k}^2u_{k-N}
\end{multline}
where 
\begin{equation}
\alpha_k = \psi'_{\mu_\ell}(|u_x^{(h)}|_{i,j}) \ \ \eta_k = \psi'_{\mu_\ell}(|u_y^{(h)}|_{i,j}) \ \ k = (i-1)N+j 
\label{eq:weights}
\end{equation} 
Hence on each row there are at most  five non zero elements given by:\\
\begin{equation}
\label{definB}
%\left\{
\begin{tabular}{ll}
$M_{k,k}$=& ${1-}{\beta \theta}(\alpha_{k}^2+\alpha_{k+1}^2+\eta_{k}^2+\eta_{k+N}^2)$,\\
$M_{k,k-1}$=&${\beta  \theta}\alpha_{k}^2$, \\
$M_{k,k+1}$=&${\beta  \theta}\alpha_{k+1}^2$,\\
$M_{k,k-N}$=&${\beta  \theta}\eta_{k}^2$,\\
$M_{k,k+N}$=&${\beta \theta}\eta_{k+N}^2$\\
\end{tabular}
%\right.
\end{equation}
In order to guarantee that the matrix $B$ is positive definite, 
it is sufficient to determine $\theta$
such that $M$ is strictly diagonally dominant, namely
\begin{multline*}
\abs {1-  \beta \theta (\alpha_{k}^2+\alpha_{k+1}^2+\eta_{k}^2+\eta_{k+N}^2) }>\beta  \theta
(\alpha_{k}^2+\alpha_{k+1}^2+\beta_{k}^2+\eta_{k+N}^2) \\ \quad \forall \,\, k=1,...,N^2
\end{multline*}
$$ \frac{1}{\beta \theta} > 2 (\alpha_{k}^2+\alpha_{k+1}^2+\eta_{k}^2+\eta_{k+N}^2) \quad \forall \,\, k=1,...,N^2$$ 
It easily follows that this relation is satisfied for 
$$\beta \theta <  \max_{k=1,..N^2}\left ( 2(\alpha_{k}^2+\alpha_{k+1}^2+\eta_{k}^2+\eta_{k+N}^2) \right )$$
namely, 
\begin{equation}
\label{CBDP}
 0< {\theta}<\frac {1}{\beta \|\Delta^w\|_{\infty}}
\end{equation}
\end{proof}
Proof of theorem \ref{th:t1}
\begin{proof}
 Using the Householder-Johns theorem \cite{A,B} $\rho(E^{-1}F)<1$ iff $A$ is symmetric positive definite (SPD) and $E^*+F$ 
 is symmetric and positive definite, where $E^*$ is the conjugate transpose of $E$. The matrix $A$ is SPD since it is symmetric and strictly diagonal dominant.
 From  Lemma \ref{lm:l1}, we have $E^*+F=M$ and therefore the condition on $\theta$ guarantees that $ E^*+F$ 
 is symmetric and positive definite.
\end{proof}
%-------------------------------------------------------------------------------------------
 
%
%\bibliography{biblio}
%\bibliographystyle{unsrt}

\end{document}